\theoremstyle{plain}
\newtheorem*{theorem*}{Theorem}
\newtheorem{theorem}{Theorem}[section] 
\newtheorem{proposition}[theorem]{Proposition}
\newtheorem{corollary}[theorem]{Corollary}
\newtheorem{conjecture}[theorem]{Conjecture}
\theoremstyle{definition}
\newtheorem{definition}[theorem]{Definition}
\numberwithin{equation}{section}
\DeclareMathOperator{\Wr}{Wr}
\DeclareMathOperator{\He}{\rm He}
\DeclareMathOperator{\htt}{ht}
\DeclareMathOperator{\strict}{\mathbf{n}}
\DeclareMathOperator{\Var}{Var}
\newcommand\covby{\lessdot}
\newcommand\cov{\gtrdot}
\newcommand\rhup[2]{\mathcal{R}^+_{#1}(#2)}
\newcommand\rhdown[2]{\mathcal{R}^-_{#1}(#2)}
\DeclarePairedDelimiter\iprod{\langle}{\rangle}
\title{Wronskian Appell Polynomials and Symmetric Functions}
\author[1]{Niels Bonneux}
\author[2]{Zachary Hamaker}
\author[3]{John Stembridge}
\author[4]{Marco Stevens}
\affil[1,4]{KU Leuven, Department of Mathematics, 
	
	
	E-mail: {\tt niels.bonneux@kuleuven.be} and {\tt marco.stevens@kuleuven.be}
}
\affil[2]{University of Florida, Department of Mathematics, 
	
	
	E-mail: {\tt zhamaker@ufl.edu}
}
\affil[3]{University of Michigan, Department of Mathematics,


E-mail: {\tt jrs@umich.edu}

}
\date{}                     
\providecommand{\keywords}[1]{\textit{\textit{Keywords: }} #1}
\begin{document}
\maketitle

\begin{abstract}\label{abstract}
We study Wronskians of Appell polynomials indexed by integer partitions.
These families of polynomials appear in rational solutions of certain Painlev\'e equations and in the study of exceptional orthogonal polynomials.
We determine their derivatives, their average and variance with respect to Plancherel measure, and introduce several recurrence relations.
In addition, we prove an integrality conjecture for Wronskian Hermite polynomials previously made by the first and last authors.
Our proofs all exploit strong connections with the theory of symmetric functions.
\end{abstract}

\keywords{
Appell polynomials, 
exceptional orthogonal polynomials, 
Plancherel measure, 
rational solutions of Painlev\'e equations, 
Schur functions, 
symmetric functions, 
Wronskians. 
}

\section{Introduction}
\label{sec:introduction}
Let  $(A_n)_{n=0}^\infty$ be a sequence of Appell polynomials;
i.e., a sequence of univariate polynomials such that $A_0=1$ and
$A'_n=nA_{n-1}$ for $n\ge1$.
In this paper, we study the Wronskians of such polynomials; i.e.,
polynomials of the form
\begin{equation}
\label{eq:directWAP}
\frac{\Wr[A_{n_1},A_{n_2},\dots,A_{n_r}]}{\Delta(\strict)},
\end{equation}
where $\Wr$ denotes the Wronskian operator,
$\strict=(n_1,n_2,\dots,n_r)$ is a vector of distinct non-negative integers, and
\[
\Delta(x_1,x_2,\dots,x_r)
  =\det[x_i^{j-1}]_{1\le i,j\le r}
  \ =\prod_{1\le i<j\le r}(x_j-x_i)
\]
is the Vandermonde determinant. The factor $\Delta(\strict)$ here acts as
a normalizing constant so that the resulting polynomials
are monic \cite[Lemma 2.1]{Bonneux_Stevens}. It is clear that~\eqref{eq:directWAP} is invariant
under permutations of $\strict$, so there is no loss of generality in
assuming that the parameters are strictly increasing and positive.
Thus for each integer partition
$\lambda=(\lambda_1\ge\lambda_2\ge\cdots\ge\lambda_r>0)$, we define
\begin{equation}
\label{eq:WAP2}
A_\lambda:=\frac{\Wr[A_{n_1},A_{n_2},\dots,A_{n_r}]}{\Delta(\strict)},
  \quad\text{where
  $\strict=(\lambda_r,\,\lambda_{r-1}+1,\,\dots,\,\lambda_1+r-1)$},
\end{equation}
and refer to these as \textbf{Wronskian Appell polynomials}. It is not hard to check that if 0 is allowed as a part of $\lambda$, there is no effect on~\eqref{eq:WAP2} if those parts are deleted.

Polynomials of this type (for specific partitions) come into play in the rational solutions of the Painlev\'e equations: for Painlev\'e III \cite{Clarkson-PIII,Kajiwara_Masuda} and Painlev\'e V \cite{Clarkson-PV}, the corresponding Appell polynomials are of a modified Laguerre type, for Painlev\'e IV \cite{Clarkson-PIV,Noumi_Yamada,Okamoto} they are of Hermite type, and for Painlev\'e VI \cite{Mazzocco} they are of modified Jacobi type.
For Painlev\'e II, the rational solutions are in terms of Yablonskii-Vorobiev polynomials, which can be expressed in terms of a Wronskian of certain Appell polynomials \cite{Kajiwara_Ohta}. For an overview of these rational solutions see for example \cite{Clarkson-survey} or \cite{VanAssche} and the references therein.
Moreover, Wronskians of Hermite \cite{Duran-Hermite,GomezUllate_Grandati_Milson}, Laguerre \cite{Bonneux_Kuijlaars, Duran-Laguerre} and Jacobi polynomials \cite{Bonneux,Duran-Jacobi} also occur in the study of exceptional orthogonal polynomials.

In \cite{Bonneux_Stevens}, the first and last authors studied Wronskians of Hermite polynomials, which are used to define exceptional Hermite polynomials and solutions for the Painlev\'e IV equation.
They introduced a new recursive formula for computing these polynomials called the ``generating recurrence'' and used it to show that the average of these polynomials is a monomial with respect to Plancherel measure.

In this paper, we extend all of the results from~\cite{Bonneux_Stevens} to the Wronskian polynomials determined by any Appell sequence $(A_n)_{n=0}^\infty$.
To do this, we construct a homomorphism $\varphi_A$ from the ring of symmetric functions $\Lambda$ to the polynomial ring $\mathbb{R}[x]$ that sends augmented Schur functions to polynomials having the form of \eqref{eq:WAP2}.
All of our results on these polynomials may then be deduced from results about symmetric functions.
The first and last authors have proved our main results (with the exception of Theorems~\ref{thm:secondmoment} and \ref{thm:integercoefficients}) by a direct approach bypassing the theory of symmetric functions, as they did for the Hermite case in \cite{Bonneux_Stevens}. The advantage of the symmetric function approach is that it provides extra structure that would otherwise be invisible at the level of univariate polynomials.

In \cite{Segeev_Veselov}, Sergeev and Veselov introduced ``generalized''
Schur polynomials and used them to construct
families of \textit{multivariate} orthogonal polynomials.
In recent work of Grandati~\cite{Grandati}, one sees that in the
confluent limit $\{x_i \to x\}_{i=1}^n$, the polynomials of Sergeev and
Veselov become Wronskians of univariate orthogonal polynomials,
although not necessarily from an Appell sequence.

The remainder of the article is organized as follows.
In Section \ref{sec:mainresults}, we give a high-level overview of our main results.
In Section \ref{sec:preliminaries}, we provide the necessary background on partitions, symmetric functions and Appell polynomials.
The homomorphism $\varphi_A$ is introduced in Section~\ref{sec:images}, while the main results for Wronskians of Appell polynomials are in the subsequent Sections \ref{sec:corollaries} and~\ref{sec:rimandrecurrence}.
We close the article in Section \ref{sec:specificAppellsequences} by explaining how to interpret our results in terms of Appell sequences that appear in applications, such as Wronskians of Hermite polynomials.

\section{Overview of the main results}
\label{sec:mainresults}

The following results refer to Wronskian Appell polynomials $A_\lambda$
as in~\eqref{eq:WAP2}.

\begin{itemize}
\item In Section \ref{sec:images}, we define a ring homomorphism $\varphi_A$ from symmetric functions to polynomials and show that Wronskian Appell polynomials are the images of ``augmented'' Schur functions (Theorem~\ref{thm:imageschur}).
We also discuss the images of other symmetric functions.
All subsequent results are proved by applying $\varphi_A$ to symmetric function identities.
\item The derivative of the Wronskian Appell polynomial $A_\lambda$
can be expressed in terms of the polynomials $A_\mu$ associated to those
partitions $\mu$ that are \textit{covered} by $\lambda$ in Young's lattice (Theorem~\ref{thm:WAPisAN}).
This relation resembles the Appell property $A_n' = nA_{n-1}$ and generalizes~\cite[Proposition 3.5]{Bonneux_Stevens} from the Hermite case to arbitrary Appell polynomials.
\item We compute the average value (Theorem~\ref{thm:average}) and second moment (Theorem~\ref{thm:secondmoment}) of each Wronskian Appell polynomial with respect to the Plancherel measure. The former generalizes~\cite[Theorem 3.4]{Bonneux_Stevens}.
\item As a consequence of the Murnaghan-Nakayama Rule, we derive a collection
of ``top-down'' relations that express $A_\lambda$ in terms
of higher degree Wronskian Appell polynomials (Theorem~\ref{thm:topdown}).
This generalizes~\cite[Theorem 3.2]{Bonneux_Stevens}.
\item The degree-increasing nature of the previous result makes it unsuitable for use in inductive arguments. 
In Section~\ref{sec:rimandrecurrence}, we prove a Schur function generalization of Newton's identities (Theorem~\ref{thm:symmetricgeneratingrecurrence}) that we have not seen elsewhere in the literature.
As a consequence, we obtain a recurrence that expresses $A_\lambda$ in
terms of lower degree Wronskian Appell polynomials (Theorem~\ref{thm:generatingrecurrence}).
This generalizes the fundamental result of \cite[Theorem 3.1]{Bonneux_Stevens} from which all other results in that paper are derived.
\item Theorem \ref{thm:WAPisAN} implies that the Wronskian Appell polynomials contain two distinguished Appell sequences: one associated with the
partitions $(n)$ (i.e., the initial Appell sequence) and another associated
with the partitions $(1^n)=(1,1,\dots,1)$.
We call the latter the \textit{dual} of the original Appell sequence.
In Section \ref{sec:dualappell} we study some of its properties.
\item In Section \ref{sec:integercoefficients}, we introduce a
condition on Appell sequences that is sufficient to force the associated
Wronskian Appell polynomials to have integer coefficients.
This allows us to deduce that Wronskian Hermite polynomials have integer
coefficients (Corollary~\ref{cor:HeZ}), thereby confirming \cite[Conjecture 3.7]{Bonneux_Stevens}.
\end{itemize}

\section{Preliminaries}
\label{sec:preliminaries}

In this section, we introduce some notation and terminology for
working with integer partitions and symmetric functions.
There are many excellent resources for these topics,
for example~\cite{Baik_Deift_Suidan,MacDonald,Stanley_EC2}.
In Section~\ref{sec:Appelldefs},
we review Appell sequences and Wronskian Appell polynomials.

\subsection{Partitions and Young's lattice}
A non-negative integer sequence
$\lambda = (\lambda_1 \geq \lambda_2 \geq \dots)$ is a \textbf{partition}
if $|\lambda|:=\sum_{i=1}^{\infty} \lambda_i$ is finite.
If $|\lambda|=m$, then $\lambda$ is said to be a partition of $m$
(or of \textbf{size} $m$) and we write $\lambda \vdash m$.
The \textbf{length} of~$\lambda$, denoted $\ell(\lambda)$,
is the largest index $r$ such that $\lambda_r>0$.
We often write $\lambda=(\lambda_1,\lambda_2,\dots, \lambda_r)$.
The unique partition of~0 is denoted $\emptyset$.
The \textbf{diagram} of a partition is
\[
D_\lambda=\{(i,j)\in\mathbb{Z}^2: 1\le i\le\ell(\lambda),
  \ 1\le j\le\lambda_i\}.
\]
The points $(i,j)\in D_\lambda$ are often depicted as unit squares
with matrix-style coordinates. We partially order partitions component-wise,
or equivalently, by inclusion of diagrams, so that
\[
\mu\le\lambda\ \ \text{if $\ D_\mu \subseteq D_\lambda$}.
\]
This partial ordering of partitions is known
as \textbf{Young's lattice} and denoted $\mathbb{Y}$.
It has a unique minimal element $\emptyset$ and is graded by size.

Given a pair $\mu\le\lambda$, the difference
\[
D_{\lambda/\mu}:= D_\lambda \setminus D_\mu
\]
is called a \textbf{skew diagram} of shape $\lambda/\mu$.  For example, 
\begin{equation*}
\label{eq:diagram}
\ytableausetup{smalltableaux}
D_{(2,1)} = \ydiagram{2,1}, \quad
D_{(4,3,2)} = \ydiagram{4,3,2} \quad
\mbox{and} \quad D_{(4,3,2)/(2,1)} = \ydiagram{2+2,1+2,2}\,.
\end{equation*}
The \textbf{conjugate} of $\lambda$, denoted $\lambda'$, is the partition
whose diagram is $\{(i,j):(j,i)\in D_\lambda\}$.
For example, $(2,1)' = (2,1)$ and $(4,3,2)' = (3,3,2,1)$.

We write $\mu\covby\lambda$ or $\lambda\cov\mu$ to indicate that
$\lambda$ covers $\mu$ in $\mathbb{Y}$; i.e., $\mu<\lambda$
and $|\lambda|-|\mu|=1$.
A \textbf{standard Young tableau} of shape $\lambda/\mu$ is a maximal saturated chain from $\mu$ to $\lambda$ in Young's lattice; i.e., a sequence
$\mu = \nu^{(0)} \covby \nu^{(1)} \covby \cdots \covby \nu^{(m)} = \lambda$.
We let $F_{\lambda/\mu}$ denote the number of such tableaux.
In case $\mu=\emptyset$, we identify $\lambda/\mu$ with $\lambda$,
so that $F_{\lambda} = F_{\lambda/\emptyset}$.


For any partition $\lambda$ and $(i,j) \in D_\lambda$, the \textbf{hook length}
at $(i,j)$ is $h(i,j) = \lambda_i - j + \lambda_j' - i +1$.
This counts the number of cells in $D_\lambda$ that are directly
below or directly to the right of $(i,j)$, including $(i,j)$.
These hook lengths occur in the classic hook formula for counting
the  standard Young tableaux of shape $\lambda$; namely,
\begin{equation}
\label{eq:Flambdahooklengths}
F_\lambda=\frac{|\lambda|!}{H(\lambda)},\ \ \text{where}
  \ H(\lambda)\ :=\prod_{(i,j)\in D_\lambda}h(i,j).
\end{equation}
See for example \cite[Corollary 7.21.6]{Stanley_EC2}.

To each partition $\lambda$ of length $r$, we associate a
\textbf{degree vector} $\strict_\lambda$ defined by
\begin{equation}
\label{eq:defdegreevector}
  \strict_{\lambda}:=(n_1,n_2,\dotsc,n_r)
    =(\lambda_r,\ \lambda_{r-1}+1,\ \dotsc,\ \lambda_1+r-1).
\end{equation}
Note its prior appearance in~\eqref{eq:WAP2}. The hook product $H(\lambda)$
has an alternative description in terms of this degree vector; namely,
\begin{equation}
\label{eq:AltHooks}
H(\lambda)=\frac{n_1!\,n_2!\,\cdots\,n_r!}{\Delta(\strict_\lambda)}.
\end{equation}
See for example \cite[Lemma 7.21.1]{Stanley_EC2}. 

\subsection{Symmetric functions}
\label{subsec:symmetricfunctions}
Fix an infinite sequence of variables $X=(x_1,x_2,\dots)$.
The \textbf{ring of symmetric functions} $\Lambda$ consists of all
bounded-degree, integer-coefficient formal series in $X$ that are
invariant under permutations of $X$. Some important examples of
elements in this ring are
\begin{itemize}
\item the \textbf{complete homogeneous symmetric functions}
$(h_m)_{m=1}^\infty$, defined by
\[
h_m = \sum_{i_1\leq i_2 \leq \dots \leq i_m} x_{i_1} x_{i_2} \cdots x_{i_m},
\]
\item the \textbf{elementary symmetric functions} $(e_m)_{m=1}^\infty$,
defined by
\[
e_m = \sum_{i_1<i_2<\dots<i_m} x_{i_1} x_{i_2} \cdots x_{i_m},\ \text{and}
\]
\item the \textbf{power sum symmetric functions} $(p_m)_{m=1}^\infty$,
defined by
\[
p_m = \sum_{i=1}^\infty x_i^m.
\]
\end{itemize}
By convention, $h_0=e_0=1$, whereas $p_0$ is normally left undefined.

It is well-known that $\Lambda$ is freely generated (as a commutative
ring with unit element) by $(h_m)_{m=1}^\infty$
as well as by $(e_m)_{m=1}^\infty$. In other words, every member of
$\Lambda$ is uniquely expressible as a polynomial in $(h_m)_{m=1}^\infty$
as well as in $(e_m)_{m=1}^\infty$, and
\[
\Lambda=\mathbb{Z}[h_1,h_2,\dotsc]=\mathbb{Z}[e_1,e_2,\dotsc].
\]
For the power sums $(p_m)_{m=1}^\infty$, this is not quite true unless we
replace $\Lambda$ with a larger ring, the $\mathbb{Q}$-algebra
$\Lambda_{\mathbb{Q}}$ that allows rational (as opposed to integer)
coefficients; thus,
\[
\Lambda_{\mathbb{Q}}=\mathbb{Q}[p_1,p_2,\dotsc].
\]
For further details, see \cite[I.2]{MacDonald}.

For partitions $\lambda$ of length $r$, one defines
\[
h_\lambda=h_{\lambda_1}h_{\lambda_2}\cdots h_{\lambda_r},\quad
e_\lambda=e_{\lambda_1}e_{\lambda_2}\cdots e_{\lambda_r},\quad
p_\lambda=p_{\lambda_1}p_{\lambda_2}\cdots p_{\lambda_r}
\]
and $h_{\emptyset}=e_{\emptyset}=p_{\emptyset}=1$, 
so that as $\lambda$ varies over $\mathbb{Y}$, $h_\lambda$, $e_\lambda$,
and $p_\lambda$ vary over all of the monomials one can form with the
terms from each of their respective sequences. In this way one sees that
$\{h_\lambda:\lambda\in\mathbb{Y}\}$,
$\{e_\lambda:\lambda\in\mathbb{Y}\}$, and
$\{p_\lambda:\lambda\in\mathbb{Y}\}$
each form bases for $\Lambda_{\mathbb{Q}}$ as a vector space.

There are algebraic relations among these symmetric functions that are
easily expressible as generating function identities. For example,
if we define
\[
H(t):=\sum_{m=0}^\infty h_mt^m,\qquad E(t):=\sum_{m=0}^\infty e_mt^m,
\]
one sees from the definitions of $h_m$ and $e_m$ that
$H(t)=\prod\limits_{i=1}^\infty(1-x_it)^{-1}$
and $E(t)=\prod\limits_{i=1}^\infty(1+x_it)$.
It follows that
\[
\log H(t) = -\log E(-t) = \sum_{i=1}^\infty-\log(1-x_it)
  = \sum_{i=1}^\infty\sum_{m=1}^\infty x_i^m\frac{t^m}{m}
  = \sum_{m=1}^\infty p_m\frac{t^m}{m},
\]
and therefore
\begin{equation}\label{eq:hp-relation}
H(t) = \frac{1}{E(-t)}= \exp\Biggl(\sum_{m=1}^\infty p_m\frac{t^m}{m}\Biggr).
\end{equation}
This shows that the ring automorphism $\omega:\Lambda \to \Lambda$ defined
by setting $\omega(h_m)=e_m$ for $m\ge1$ has the property that
$\omega(p_m)=(-1)^{m-1}p_m$ and $\omega(e_m)=h_m$.
In particular, it is an involution.

A family of symmetric functions of special importance is formed by
the \textbf{Schur functions}
$(s_\lambda)_{\lambda \in \mathbb{Y}}$.
They have many equivalent definitions; the one that is most relevant
for our purposes is the (first) Jacobi-Trudi
formula~\cite[I.3 (3.4)]{MacDonald}
\begin{equation}
\label{eq:firstJacobiTrudi}
s_\lambda = \det[h_{\lambda_i-i+j}]_{1\leq i,j\leq \ell(\lambda)},
\end{equation}
using the convention that $h_{-m}=0$ for integers $m>0$.
This determinant is evidently an integer polynomial in the complete
homogeneous symmetric functions $(h_m)_{m=1}^\infty$, so it is clear from
this definition that each Schur function belongs to $\Lambda$.
It is also not hard to deduce from this definition that the partitions of $m$
may be ordered so that $s_\lambda=h_\lambda+\text{terms $h_\mu$
involving ``later'' }\mu$, so $\{s_\lambda:\lambda\in\mathbb{Y}\}$
is a $\mathbb{Z}$-basis for $\Lambda$ and a $\mathbb{Q}$-basis
for $\Lambda_{\mathbb{Q}}$.

An alternative formula for Schur functions is the
dual Jacobi-Trudi identity~\cite[I.3 (3.5)]{MacDonald},
which amounts to the fact that $\omega(s_\lambda)=s_{\lambda'}$.
In other words, we have
\begin{equation}
\label{eq:secondJacobiTrudi}
s_{\lambda'} = \det[e_{\lambda_i-i+j}]_{1\leq i,j\leq \ell(\lambda)},
\end{equation}
with the similar convention that $e_{-m}=0$ for $m>0$.


\subsection{Appell sequences and Wronskian Appell polynomials}
\label{sec:Appelldefs}

Appell introduced the following family of univariate
polynomial sequences~\cite{Appell}.
\\

\begin{definition}\label{def:Appellsequence}
An \textbf{Appell sequence} is a sequence of polynomials $(A_n)_{n=0}^\infty$ such that
\begin{itemize}
	\item[\textup{(i)}] $A_0=1$, and
	\item[\textup{(ii)}] $A_n'=n A_{n-1}$ for all $n\geq 1$.
\end{itemize}
\end{definition}

An easy consequence of this definition is that each $A_n$ is monic of degree $n$.
Moreover, with $A_1(x) = x + z_1$ the change of variables $x \mapsto x - z_1$ produces a \textbf{central} Appell sequence $(\tilde{A}_n)_{n=0}^\infty$ with $\tilde{A}_1(x)=x$.
Some examples of Appell sequences are the monomials and the probabilists Hermite polynomials.
These and other Appell sequences of interest are discussed in Section \ref{sec:specificAppellsequences}.

For each Appell sequence $(A_n)_{n=0}^\infty$ we set $z_n=A_n(0)$.
One can easily see (by induction and the Appell property) that
\begin{equation}
\label{eq:explicitAppell}
A_n(x)=\sum_{k=0}^{n} \binom{n}{k} z_k \, x^{n-k} \qquad (n\geq 0).
\end{equation}
Furthermore, any Appell sequence has an exponential generating function of the form
\begin{equation}
\label{eq:expgenfunAppell}
A(x,t):=\sum_{k=0}^\infty A_k(x) \frac{t^k}{k!} = \exp(xt) f_A(t),
\end{equation}
where $f_A$ is some formal power series \cite[Section 9]{Carlitz}. 
Substituting $x=0$ in \eqref{eq:expgenfunAppell}, we see that $f_A$ is precisely the exponential generating function of the sequence $(z_n)_{n=0}^\infty$; i.e.,
\begin{equation*}
\label{eq:explicitfA}
f_A(t)=\sum_{k=0}^\infty z_k \frac{t^k}{k!}.
\end{equation*} 
Note that $f_A(0)=z_0=A_0(0)=1$, so one can view the values $z_n$ as the
moments and $f_A(t)$ as the moment generating function of some probability
measure. Building on this analogy, the logarithm of $f_A(t)$ centered
at $t=0$ is
\begin{equation}
\label{eq:logofgenerating}
\log f_A(t)= \sum_{k=1}^\infty c_k \frac{t^k}{k!},
\end{equation}
where the values $c_k$ are the cumulants of this probability measure.
Here, $z_k$ and $c_k$ depend on the specific Appell sequence $(A_n)_{n=0}^\infty$ but we omit this relationship when the Appell sequence is clear from the context. An explicit relation between the values $z_k$ and $c_k$ is given by
\begin{equation*}
	c_n 
	= z_n - \sum_{i=1}^{n-1} \binom{n-1}{i} c_{n-i} \, z_i \qquad (n\geq 1).
\end{equation*}
For more examples and properties of Appell polynomials, we refer to \cite{Aceto} for a matrix approach or \cite{Ta} for a probabilistic approach.

As discussed in the introduction, we define the \textbf{Wronskian Appell
polynomial} associated to a partition $\lambda$ of length $r$ and a given
Appell sequence $(A_n)_{n=0}^\infty$ to be
\[
  A_{\lambda}=\frac{\Wr[A_{n_1},A_{n_2},\dots,A_{n_r}]}{\Delta(\strict_{\lambda})},
\]
where $\strict_\lambda=(n_1,n_2,\dots,n_r)=(\lambda_r,\ \lambda_{r-1}+1,\ \dotsc,\ \lambda_1+r-1)$ as in \eqref{eq:defdegreevector}.

Since each polynomial $A_n$ is monic of degree $n$,
one can show that $A_{\lambda}$ is monic of degree~$|\lambda|$
(see~\cite[Lemma 2.1]{Bonneux_Stevens}).
It is easy to see that $A_{(n)}=A_n$ for all $n\geq 1$ and $A_\emptyset=A_0=1$,
so Wronskian Appell polynomials generalize the Appell sequence. 
One can check that $A_\lambda$ remains unchanged if a~0 is
inserted into the partition $\lambda$. 

%


\section{Wronskian Appell polynomials and Schur functions}
\label{sec:images}

Fix an Appell sequence $A=(A_n)_{n=0}^\infty$.
The main results of this paper rely on a ring
homomorphism $\varphi_A$ from $\Lambda$ to $\mathbb{R}[x]$ defined by
\begin{equation}\label{eq:defconnecting}
  \varphi_A(h_m)=\frac{A_m}{m!} \qquad (m\geq 1).
\end{equation}
This completely determines $\varphi_A$, since $(h_m)_{m=1}^\infty$
freely generates $\Lambda$.

\begin{theorem}
\label{thm:imageschur}
If $A=(A_n)_{n=0}^\infty$ is an Appell sequence and $\lambda \in \mathbb{Y}$,
then
\begin{equation}
\label{eq:imageschur}
\varphi_A(s_\lambda)
   = \frac{A_\lambda}{H(\lambda)}
   = \frac{F_\lambda A_\lambda}{\lvert \lambda \rvert!}.
\end{equation}
\end{theorem}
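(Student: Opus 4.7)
The plan is to unfold $s_\lambda$ via the first Jacobi-Trudi formula \eqref{eq:firstJacobiTrudi}, push $\varphi_A$ through the determinant, and then reinterpret the resulting expression as the Wronskian $\Wr[A_{n_1},\dots,A_{n_r}]$ divided by the right normalizing constant. The bridge between the two determinants is the Appell identity $A_n^{(k)}=\frac{n!}{(n-k)!}A_{n-k}$, which allows us to rewrite each entry $A_{m}/m!$ of the image matrix as $A_{n_j}^{(i-1)}/n_j!$ for a suitable choice of indices.

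Let $r=\ell(\lambda)$. First I apply the ring homomorphism to the Jacobi-Trudi determinant to obtain
\[
  \varphi_A(s_\lambda)
    =\det\left[\frac{A_{\lambda_i-i+j}}{(\lambda_i-i+j)!}\right]_{1\le i,j\le r},
\]
with the convention $A_{-m}=0$ for $m>0$ inherited from $h_{-m}=0$. Next, I pass to the transpose (which preserves the determinant) and reverse both the rows and the columns; each reversal contributes a sign $(-1)^{\binom{r}{2}}$ and together the signs cancel. A short index computation shows that the resulting $(i,j)$-entry is
\[
  \frac{A_{\lambda_{r-j+1}+j-i}}{(\lambda_{r-j+1}+j-i)!}
   \ =\ \frac{A_{n_j-i+1}}{(n_j-i+1)!}
   \ =\ \frac{1}{n_j!}\,A_{n_j}^{(i-1)},
\]
where in the last step I used the Appell property. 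Pulling the scalar $1/n_j!$ out of column $j$ yields
\[
  \varphi_A(s_\lambda)
    =\frac{1}{n_1!\,n_2!\,\cdots\,n_r!}\,
      \det\!\left[A_{n_j}^{(i-1)}\right]_{1\le i,j\le r}
    =\frac{\Wr[A_{n_1},\dots,A_{n_r}]}{n_1!\,n_2!\,\cdots\,n_r!}.
\]

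To finish, I combine the definition $A_\lambda=\Wr[A_{n_1},\dots,A_{n_r}]/\Delta(\strict_\lambda)$ with the alternative hook product formula \eqref{eq:AltHooks}, namely $H(\lambda)=n_1!\cdots n_r!/\Delta(\strict_\lambda)$, to rewrite the right-hand side as $A_\lambda/H(\lambda)$. The second equality of \eqref{eq:imageschur} is then just the hook length formula \eqref{eq:Flambdahooklengths}.

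The only genuinely non-routine step is the matching of indices between the Jacobi-Trudi determinant and the Wronskian determinant: the rows of Jacobi-Trudi are indexed by the parts of $\lambda$ while the columns of the Wronskian are indexed by the entries of $\strict_\lambda$ (which are essentially the reversed parts of $\lambda$ shifted by $0,1,\dots,r-1$). Tracking the reversal and checking that the signs $(-1)^{\binom{r}{2}}$ from row- and column-reversal cancel is where one needs to be careful; everything else is bookkeeping.
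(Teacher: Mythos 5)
Your proof is correct and follows essentially the same route as the paper's: both hinge on the Jacobi--Trudi formula \eqref{eq:firstJacobiTrudi}, the identity $A_n^{(k)}=\frac{n!}{(n-k)!}A_{n-k}$, a row/column reversal whose signs cancel, and the hook-product identity \eqref{eq:AltHooks}. The only cosmetic difference is direction: the paper starts from the Wronskian and arrives at $\varphi_A(s_\lambda)$, whereas you start from $\varphi_A(s_\lambda)$ and arrive at the Wronskian.
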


\begin{proof}
The second equality follows directly from \eqref{eq:Flambdahooklengths}.
For the first equality, let $\ell(\lambda)=r$
and $\strict_\lambda=(n_1,\dots,n_r)$. By the Appell property and \eqref{eq:defconnecting}, we have
\[
A_n^{(j)}=n(n-1)\cdots(n-j+1)A_{n-j}=n!\,\varphi_A(h_{n-j}).
\]
Recalling the convention that $h_m=0$ for $m<0$,
this is valid even for $j>n$. Therefore,
\begin{align*}
A_\lambda
&= \frac{\Wr[A_{n_1},A_{n_2},\dots,A_{n_r}]}{\Delta(\strict_\lambda)}
  =\frac{1}{\Delta(\strict_\lambda)}\cdot
  \det\bigl[n_i!\,\varphi_A(h_{n_i-j+1})\bigr]_{1\le i,j\le r}\\
&= \frac{n_1!\cdots n_r!}{\Delta(\strict_\lambda)}\cdot
  \det\left[\varphi_A(h_{\lambda_{r+1-i}+i-j})\right]_{1\le i,j\le r},
\end{align*}
since $n_i=\lambda_{r+1-i}+i-1$ by definition. Reversing rows
and columns in this determinant yields
\[
A_\lambda = \frac{n_1!\cdots n_r!}{\Delta(\strict_\lambda)}\cdot
  \det\left[\varphi_A(h_{\lambda_i-i+j})\right]_{1\le i,j\le r}
= \frac{n_1!\cdots n_r!}{\Delta(\strict_\lambda)}\varphi_A(s_\lambda)
\]
by~\eqref{eq:firstJacobiTrudi}.
Use~\eqref{eq:AltHooks} to complete the proof.
\end{proof}

The symmetric functions $\tilde{s}_\lambda = H(\lambda) s_\lambda$
are referred to as \textbf{augmented Schur functions}
in~\cite[I.7, Ex.~17(a)]{MacDonald}. Note that~\eqref{eq:imageschur}
directly implies $\varphi_A(\tilde{s}_\lambda)=A_\lambda$.

Since $s_{(1^n)}=e_n$, which is the $1\times 1$ case of the second Jacobi-Trudi identity \eqref{eq:secondJacobiTrudi}, we have the following corollary.
 
\begin{corollary}
\label{cor:imageelementary}
If $A=(A_n)_{n=0}^\infty$ is an Appell sequence, then for $n\geq0$,
\[
\varphi_A(e_n)=\frac{A_{(1^n)}}{n!}.
\]
\end{corollary}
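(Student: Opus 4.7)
The plan is to specialize Theorem~\ref{thm:imageschur} to the column partition $\lambda = (1^n)$ and then to identify $s_{(1^n)}$ with $e_n$. First I would apply~\eqref{eq:imageschur} with $\lambda = (1^n)$: the diagram is a single column, so the unique standard Young tableau fills the cells top-to-bottom with $1,2,\dots,n$, giving $F_{(1^n)} = 1$; equivalently, the hook lengths down the column are $n, n-1, \dots, 1$, so $H((1^n)) = n!$. Either form of~\eqref{eq:imageschur} then yields $\varphi_A(s_{(1^n)}) = A_{(1^n)}/n!$.

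Next I would identify $s_{(1^n)}$ with $e_n$. This is the $1 \times 1$ instance of the dual Jacobi--Trudi identity~\eqref{eq:secondJacobiTrudi} applied to $\lambda = (n)$, whose conjugate is $\lambda' = (1^n)$: the determinant collapses to a single entry $e_{n-1+1} = e_n$. Substituting $s_{(1^n)} = e_n$ into the previous identity gives $\varphi_A(e_n) = A_{(1^n)}/n!$, which is the claim. The boundary case $n=0$ is handled by the conventions $e_0 = 1$ and $A_\emptyset = 1$. There is no genuine obstacle here: the corollary is a direct translation of Theorem~\ref{thm:imageschur} via the two observations $s_{(1^n)} = e_n$ and $H((1^n)) = n!$.
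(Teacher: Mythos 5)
Your proof is correct and follows exactly the paper's route: specialize Theorem~\ref{thm:imageschur} to $\lambda=(1^n)$ (where $F_{(1^n)}=1$, equivalently $H((1^n))=n!$) and identify $s_{(1^n)}=e_n$ via the $1\times1$ case of the dual Jacobi--Trudi identity~\eqref{eq:secondJacobiTrudi}. The only difference is that you spell out the details the paper leaves implicit.
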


In Section \ref{sec:dualappell}, we study the polynomials $(A_{(1^n)})_{n=0}^{\infty}$ and show they also form an Appell sequence.
We now consider the image of the power sum symmetric functions $p_n$.

\begin{proposition}
\label{prop:imageofpowersums}
If $(A_n(x))_{n=0}^\infty$ is an Appell sequence and $c_n$ is given
as in~\eqref{eq:logofgenerating}, then
\[
\varphi_A(p_1)=x+c_1 \quad \text{and} \quad
   \varphi_A(p_n)= \frac{c_n}{(n-1)!} \quad \text{for }\, n \geq 2.
\]
\end{proposition}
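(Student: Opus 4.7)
The plan is to read off both sides from generating functions. By definition, $\varphi_A(h_m) = A_m/m!$, so applying $\varphi_A$ termwise to the generating series $H(t) = \sum_{m \geq 0} h_m t^m$ produces
\[
\sum_{m=0}^{\infty} \varphi_A(h_m)\, t^m = \sum_{m=0}^{\infty} \frac{A_m(x)}{m!}\, t^m = A(x,t) = \exp(xt)\, f_A(t),
\]
where the last equality is~\eqref{eq:expgenfunAppell}. I would therefore apply $\varphi_A$ to the identity~\eqref{eq:hp-relation}, namely $H(t) = \exp\bigl(\sum_{m \geq 1} p_m t^m/m\bigr)$. This yields
\[
\exp\!\Biggl(\sum_{m=1}^{\infty} \varphi_A(p_m)\, \frac{t^m}{m}\Biggr) = \exp(xt)\, f_A(t).
\]

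Next I would take logarithms on both sides. Using the definition~\eqref{eq:logofgenerating} of the cumulants $c_k$, the right-hand side gives
\[
xt + \log f_A(t) = xt + \sum_{k=1}^{\infty} c_k \frac{t^k}{k!},
\]
so comparing coefficients of $t^m$ yields
\[
\frac{\varphi_A(p_1)}{1} = x + c_1 \qquad \text{and} \qquad \frac{\varphi_A(p_m)}{m} = \frac{c_m}{m!} \quad (m \geq 2),
\]
which rearranges to the two claimed formulas.

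The only subtle point is that $\varphi_A$ is a ring homomorphism defined on $\Lambda$ (or its rational extension $\Lambda_{\mathbb{Q}}$, which is needed since the power sums generate $\Lambda_{\mathbb{Q}}$ rather than $\Lambda$), so one must justify that $\varphi_A$ may be extended to act on formal power series in $t$ with coefficients in $\Lambda_{\mathbb{Q}}$, and in particular that it commutes with the $\exp$ and $\log$ operations appearing above. This is standard: both sides of~\eqref{eq:hp-relation} are well-defined elements of $\Lambda_{\mathbb{Q}}[[t]]$ with constant term~$1$, and $\varphi_A$ extends coefficient-wise to an $\mathbb{R}[[t]]$-valued homomorphism preserving $\exp$ and $\log$ of series with constant term $0$ and $1$ respectively. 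Once this routine justification is in place, the comparison of coefficients above finishes the proof.
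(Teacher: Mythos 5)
Your proposal is correct and follows essentially the same route as the paper: apply $\varphi_A$ coefficient-wise to~\eqref{eq:hp-relation}, identify the result with $e^{xt}f_A(t)$ via~\eqref{eq:expgenfunAppell}, take logarithms, and compare coefficients against~\eqref{eq:logofgenerating}. Your extra remark justifying the coefficient-wise extension of $\varphi_A$ to $\Lambda_{\mathbb{Q}}[[t]]$ and its compatibility with $\exp$ and $\log$ is a point the paper leaves implicit, but it is indeed routine and does not change the argument.
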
 

\begin{proof}
Applying $\phi_A$ to~\eqref{eq:hp-relation} yields
\[
\exp\Biggl(\sum_{n=1}^\infty\varphi_A(p_n)\frac{t^n}{n}\Biggr)
  = \sum_{n=0}^\infty \varphi_A(h_n)t^n
  = \sum_{n=0}^\infty A_n(x)\frac{t^n}{n!}=e^{xt}f_A(t),
\]
the last equality being~\eqref{eq:expgenfunAppell}. Hence
\[
\sum_{n=1}^\infty \varphi_A(p_n) \frac{t^n}{n}
   = xt + \log f_A(t) = (x+c_1)t \ +\  \sum_{n=2}^\infty c_n \frac{t^n}{n!},
\]
from which the result follows.
\end{proof}

It is noteworthy that the value of $\varphi_A(p_n)$ is a constant for $n\geq 2$ by the above proposition. We now have the following table of images of the homomorphism $\varphi_A$.

\begin{center}
\renewcommand{\arraystretch}{1.5}
\begin{tabular}{c c c c}
$\Lambda$ & $\xrightarrow{\varphi_{A}}$ & $\mathbb{R}[x]$ & \\
\hline
$s_\lambda$ & & $F_\lambda A_\lambda/\lvert \lambda \rvert!$ & \\
$\tilde{s}_\lambda$ & & $A_\lambda$ & \\
$h_n$ & & $A_n/n!$ & \\
$e_n$ & & $A_{(1^n)}/n!$ & \\
$p_1$ & & $x+c_1$ & \\
$p_n$ & & $c_n/(n-1)!$ & if $n\geq 2$\\
\hline
\end{tabular}
\end{center}


\section{Some consequences}
\label{sec:corollaries}
\subsection{The derivative and Appell nets}
\label{sec:derivatives}
In~\cite{Bonneux_Stevens}, the first and last authors observed a generalization of the Appell property for Wronskian Hermite polynomials.
The following extends their result to all Wronskian Appell polynomials.

\begin{theorem}\label{thm:WAPisAN}
If $(A_n)_{n=0}^\infty$ is an Appell sequence,
then the polynomials $(A_\lambda)_{\lambda \in \mathbb{Y}}$ satisfy
\begin{equation}
\label{eq:AppellNet}
F_\lambda^{\vphantom'}A_\lambda' 
	= \lvert \lambda \rvert \sum_{\mu \covby \lambda} F_\mu A_\mu.
\end{equation}
\end{theorem}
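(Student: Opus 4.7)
The plan is to transport the derivative $d/dx$ on $\mathbb{R}[x]$ back to $\Lambda$ via the homomorphism $\varphi_A$, and then apply the resulting operator to the identity $\varphi_A(s_\lambda)=F_\lambda A_\lambda/|\lambda|!$ from Theorem~\ref{thm:imageschur}.

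First, I would identify a derivation $D:\Lambda\to\Lambda$ making the diagram commute, in the sense that $\frac{d}{dx}\,\varphi_A(f) = \varphi_A(D f)$ for every $f\in\Lambda$. The Appell property $A_m'=mA_{m-1}$ together with \eqref{eq:defconnecting} gives $\frac{d}{dx}\varphi_A(h_m) = A_{m-1}/(m-1)! = \varphi_A(h_{m-1})$, so on the free generators $D$ must satisfy $D(h_m)=h_{m-1}$ (with $h_0=1$, $h_{-m}=0$). Since both $d/dx$ and $D$ are derivations and they agree on the generators of $\Lambda$ after applying $\varphi_A$, the commutation holds on all of $\Lambda$.

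The key symmetric function identity is then
\[
D(s_\lambda)=\sum_{\mu\covby\lambda}s_\mu.
\]
To verify this, I would use the generating function \eqref{eq:hp-relation} to recognize $D$ as $\partial/\partial p_1$: differentiating $H(t)=\exp(\sum_{m\ge1}p_m t^m/m)$ with respect to $p_1$ gives $tH(t)$, hence $\partial h_m/\partial p_1=h_{m-1}$, so $D=\partial/\partial p_1$. Under the Hall inner product, $\partial/\partial p_1$ is adjoint to multiplication by $p_1$, so for every $\lambda$ the coefficient of $s_\mu$ in $D(s_\lambda)$ equals the coefficient of $s_\lambda$ in $p_1 s_\mu$. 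By the Pieri rule, $p_1 s_\mu = \sum_{\nu\cov\mu}s_\nu$, which yields exactly the asserted formula.

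Combining these two ingredients, applying $d/dx$ to $\varphi_A(s_\lambda)=F_\lambda A_\lambda/|\lambda|!$ and using $\varphi_A(s_\mu)=F_\mu A_\mu/|\mu|!$ with $|\mu|=|\lambda|-1$ gives
\[
\frac{F_\lambda A_\lambda'}{|\lambda|!}
= \varphi_A(D s_\lambda)
= \sum_{\mu\covby\lambda}\frac{F_\mu A_\mu}{(|\lambda|-1)!},
\]
and clearing denominators yields \eqref{eq:AppellNet}. The main obstacle is really the first step — recognizing that $d/dx$ pulls back along $\varphi_A$ to the well-known derivation $\partial/\partial p_1$ on $\Lambda$ — since once this bridge is in place, the Pieri/adjointness argument for the action on Schur functions is standard and the conclusion is immediate.
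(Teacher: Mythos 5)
Your proof is correct and follows essentially the same route as the paper: both transport $d/dx$ along $\varphi_A$ to the derivation $\partial/\partial p_1$ on $\Lambda$ and then combine $\partial s_\lambda/\partial p_1=\sum_{\mu\covby\lambda}s_\mu$ with Theorem~\ref{thm:imageschur}. The only differences are minor: the paper verifies the commutation identity on $p$-monomials via Proposition~\ref{prop:imageofpowersums} rather than on the $h$-generators, and it proves the Schur-function identity by differentiating the Jacobi--Trudi determinant rather than by adjointness plus the Pieri rule (the latter being exactly the argument the paper deploys later for the more general Theorem~\ref{thm:symmetricgeneratingrecurrence}).
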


\begin{proof}
Since $(p_\lambda)_{\lambda \in \mathbb{Y}}$ is a basis of $\Lambda_{\mathbb{Q}}$, we may regard each $f\in\Lambda_{\mathbb{Q}}$ as a polynomial in finitely many of the variables $(p_n)_{n=1}^\infty$. In this way, $f$ has well-defined partial derivatives with respect to these variables.
In particular, we claim that
\begin{equation}
\label{eq:partial-phi}
\frac{\partial}{\partial x}\varphi_A(f)
  = \varphi_A\Bigl(\frac{\partial f}{\partial p_1}\Bigr).
\end{equation}
Since both sides are linear, it suffices to check this for
monomials $f=p_1^{\alpha_1}\cdots p_m^{\alpha_m}$. In that case,
Proposition~\ref{prop:imageofpowersums} implies
\[
\frac{\partial}{\partial x}\varphi_A(f)
  = \alpha_1(x+c_1)^{\alpha_1-1}c_2^{\alpha_2}\cdots c_m^{\alpha_m}
  = \varphi_A\Bigl(\frac{\partial f}{\partial p_1}\Bigr),
\]
proving the claim.

On the other hand, formally differentiating~\eqref{eq:hp-relation}
with respect to $p_1$ yields
\[
\frac{\partial H(t)}{\partial p_1}=tH(t),
\]
and thus $\partial h_n/\partial p_1 = h_{n-1}$ for all $n$
(with $h_{-m}=0$ for $m>0$ as usual).
It follows that by differentiation
of~\eqref{eq:firstJacobiTrudi}, one obtains
\[
\frac{\partial}{\partial p_1} s_\lambda
 \ =\ \sum_{k=1}^{\ell(\lambda)}\det\left[h_{\lambda_i-\delta_{k,i}-i+j}\right]_{1\leq i,j\leq \ell(\lambda)},
\]
where $\delta_{k,i}$ denotes a Kronecker delta.
If $\lambda_k>\lambda_{k+1}$, the $k$th determinant in this sum is $s_\mu$,
where $\mu$ is obtained from $\lambda$ by decreasing $\lambda_k$ by~1.
Otherwise, if $\lambda_k=\lambda_{k+1}$, then the $k$th determinant has
two equal rows and therefore vanishes.
Thus the nonzero terms in the sum are indexed precisely by the partitions
covered by $\lambda$ in Young's lattice, and we conclude that
\[
\frac{\partial}{\partial p_1} s_\lambda = \sum_{\mu \covby \lambda} s_\mu.
\]
The result now follows from~\eqref{eq:partial-phi}
and Theorem~\ref{thm:imageschur}.
\end{proof}

Motivated by the above result, define an \textbf{Appell net} to be a collection of univariate polynomials $(A_\lambda)_{\lambda \in \mathbb{Y}}$
with $A_\emptyset = 1$ satisfying~\eqref{eq:AppellNet}.
Setting $z_\lambda = A_\lambda(0)$ and $n=|\lambda|$,
a $k$-fold iteration of~\eqref{eq:AppellNet} yields
\[
F_\lambda A^{(k)}_\lambda = n(n-1)\cdots(n-k+1)
  \sum_{\mu\vdash n-k}F_{\lambda/\mu} F_\mu A_\mu,
\]
and therefore
\begin{equation*}
\label{eq:Appellnetexpression}
F_{\lambda} A_{\lambda}(x)
  = \sum_\mu\binom{|\lambda|}{|\mu|}F_{\lambda/\mu}
     F_{\mu} z_{\mu} x^{|\lambda|-|\mu|}.	
\end{equation*}
Conversely, it is not hard to check that every choice of constants $(z_\lambda)_{\lambda \in \mathbb{Y}}$ yields an Appell net via the above formula. Moreover, a similar construction leads to Appell nets on any differential poset as introduced by Stanley \cite{Stanley_DiffPos}. However, we will not pursue this further here.

\subsection{Plancherel measure statistics}
\label{sec:averages}
It is well known that the order of a finite group is the sum of the
squares of the dimensions of its irreducible representations.
In the case of the symmetric group of degree $n$, this identity
takes the form
\begin{equation}\label{eq:Plancherel}
	n! = \sum_{\lambda \vdash n} F^2_\lambda.
\end{equation}
The corresponding \textbf{Plancherel measure} may thus be viewed as
a probability measure on partitions of $n$ with
\[
\mathbb{P}(X = \lambda) = \frac{1}{n!}F^2_\lambda.
\]
In particular, for each Appell sequence $(A_m)_{m=0}^\infty$ and each $n$
we may interpret the associated Wronskian Appell polynomials
$\{A_\lambda:\lambda\vdash n\}$ as a random variable with respect to
this measure.

In the following, we derive the expected value and variance of these
random variables. The first of these
generalizes~\cite[Theorem 3.4]{Bonneux_Stevens} from the Hermite
case to any Appell sequence.

\begin{theorem}
\label{thm:average}
If $(A_m)_{m=0}^\infty$ is an Appell sequence, then
$\mathbb{E}(A_\lambda:\lambda\vdash n) = A_1^n$.
\end{theorem}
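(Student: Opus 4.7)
The plan is to translate the statement into the symmetric function world via $\varphi_A$ and reduce it to a well-known Schur expansion. Using Theorem~\ref{thm:imageschur}, we have $A_\lambda = (|\lambda|!/F_\lambda)\,\varphi_A(s_\lambda)$, so the weighted sum telescopes neatly:
\[
\mathbb{E}(A_\lambda : \lambda \vdash n)
= \sum_{\lambda \vdash n} \frac{F_\lambda^2}{n!} A_\lambda
= \sum_{\lambda \vdash n} F_\lambda\,\varphi_A(s_\lambda)
= \varphi_A\!\left(\sum_{\lambda \vdash n} F_\lambda s_\lambda\right).
\]
So the task reduces to identifying the symmetric function $\sum_{\lambda \vdash n} F_\lambda s_\lambda$ in $\Lambda$ and then applying $\varphi_A$.

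The key identity I would invoke is the classical expansion
\[
h_1^n \;=\; \sum_{\lambda \vdash n} F_\lambda\, s_\lambda,
\]
which is standard and follows either from iterating Pieri's rule (since multiplication by $s_{(1)} = h_1$ on $s_\mu$ yields $\sum_{\lambda \cov \mu} s_\lambda$, and $F_\lambda$ counts saturated chains from $\emptyset$ to $\lambda$ in $\mathbb{Y}$), or equivalently from the RSK correspondence. Since $\varphi_A$ is a ring homomorphism and $\varphi_A(h_1) = A_1/1! = A_1$, we conclude
\[
\varphi_A\!\left(\sum_{\lambda \vdash n} F_\lambda s_\lambda\right) = \varphi_A(h_1^n) = \varphi_A(h_1)^n = A_1^n,
\]
which is exactly the claim.

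There is essentially no obstacle: the result is an immediate consequence of Theorem~\ref{thm:imageschur} combined with a well-known Schur expansion, and no separate computation of moments or cumulants is required. The only nontrivial ingredient is the identity $h_1^n = \sum_{\lambda \vdash n} F_\lambda s_\lambda$, which I would cite from a standard reference such as~\cite[Corollary 7.12.5]{Stanley_EC2} rather than reprove. The strength of the symmetric function approach is highlighted here: the statement, which involves a nontrivial average of determinantal polynomials, becomes a one-line computation after passing through $\varphi_A$.
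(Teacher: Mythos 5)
Your proof is correct and is essentially identical to the paper's: both apply $\varphi_A$ to the identity $\sum_{\lambda\vdash n}F_\lambda s_\lambda=h_1^n$ from \cite[Corollary 7.12.5]{Stanley_EC2} and use Theorem~\ref{thm:imageschur} to translate back to the polynomials $A_\lambda$. You simply spell out the intermediate bookkeeping that the paper leaves implicit.
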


\begin{proof}
If we apply $\varphi_A$ to the identity~\cite[Corollary 7.12.5]{Stanley_EC2}
\[
\sum_{\lambda \vdash n} F_\lambda s_\lambda = h_1^n,
\]
the result follows.
\end{proof}

For the second moment, recall that an Appell sequence
$(A_m)_{m=0}^\infty$ is central if $A_1(0)=0$.

\begin{theorem}	\label{thm:secondmoment}
If $A=(A_m)_{m=0}^\infty$ is the Appell sequence determined
by $\displaystyle\log f_A(t) = \sum\limits_{k=1}^\infty c_k \frac{t^k}{k!}$, then
\begin{equation*}\label{eq:secondmoment}
\mathbb{E}\left(A^2_{\lambda}(x):\lambda\vdash n\right)
  = \sum_{\lambda \vdash n}\frac{1}{n!}F_\lambda^2 A^2_\lambda(x) 
	= B_n((x+c_1)^2),
\end{equation*}
where $B=(B_n)_{n=0}^{\infty}$ is the central Appell sequence determined by $\displaystyle\log f_B(t) = \sum\limits_{k=2}^\infty \frac{c_k^2}{(k-1)!} \frac{t^k}{k!}$.
\end{theorem}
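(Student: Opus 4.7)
The plan is to mimic the proof of Theorem~\ref{thm:average} but starting from a Cauchy-type identity whose degree-$n$ part yields $\sum_{\lambda \vdash n} s_\lambda^2$. Specifically, I would begin with the classical Cauchy identity in two alphabets $X$ and $Y$,
\[
\sum_{\lambda \in \mathbb{Y}} s_\lambda(X)\, s_\lambda(Y) \;=\; \exp\Bigl(\sum_{k=1}^\infty \frac{p_k(X)\, p_k(Y)}{k}\Bigr),
\]
which comes from the two standard evaluations of $\prod_{i,j}(1-x_iy_j)^{-1}$ combined with the logarithmic expansion that produced~\eqref{eq:hp-relation}. Substituting $y_i = t x_i$ and using the homogeneity of $s_\lambda$ (degree $|\lambda|$) and $p_k$ (degree $k$) yields the graded form
\[
\sum_{n=0}^\infty t^n \sum_{\lambda \vdash n} s_\lambda^2 \;=\; \exp\Bigl(\sum_{k=1}^\infty \frac{t^k\, p_k^2}{k}\Bigr)
\]
as an identity in $\Lambda[[t]]$.

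Next, I would apply $\varphi_A$ coefficient-wise in $t$. On the left, Theorem~\ref{thm:imageschur} gives $\varphi_A(s_\lambda)^2 = F_\lambda^2 A_\lambda^2/(n!)^2$ for $\lambda \vdash n$, so the left-hand side becomes $\sum_n \tfrac{t^n}{(n!)^2}\sum_{\lambda \vdash n} F_\lambda^2 A_\lambda^2$. On the right, Proposition~\ref{prop:imageofpowersums} gives $\varphi_A(p_1)^2 = (x+c_1)^2$ and $\varphi_A(p_k)^2 = c_k^2/((k-1)!)^2$ for $k \ge 2$, producing
\[
\exp\Bigl(t(x+c_1)^2 + \sum_{k=2}^\infty \frac{t^k\, c_k^2}{k\,((k-1)!)^2}\Bigr).
\]

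The final step is to recognize this as the exponential generating function of $(B_n((x+c_1)^2))_{n\ge 0}$. Using the elementary identity $k \cdot ((k-1)!)^2 = (k-1)!\, k!$, the $k\ge 2$ portion of the exponent is precisely $\log f_B(t)$, so the right-hand side simplifies to $\exp(t(x+c_1)^2)\, f_B(t) = \sum_n B_n((x+c_1)^2)\, t^n/n!$ by~\eqref{eq:expgenfunAppell}. Comparing the coefficients of $t^n$ and multiplying by $n!$ yields the theorem. The only non-routine step is choosing the correct graded Cauchy identity; once that substitution is made, the remainder is straightforward generating-function bookkeeping and a factorial identity.
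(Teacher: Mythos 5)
Your proposal is correct and follows essentially the same route as the paper: both start from the graded Cauchy identity $\sum_\lambda s_\lambda^2 t^{|\lambda|} = \exp\bigl(\sum_k p_k^2 t^k/k\bigr)$, apply $\varphi_A$ using Theorem~\ref{thm:imageschur} and Proposition~\ref{prop:imageofpowersums}, and recognize the result as $e^{(x+c_1)^2 t} f_B(t)$ via the factorial identity $k\,((k-1)!)^2 = (k-1)!\,k!$. The only cosmetic difference is that you derive the graded identity by substituting $y_i = t x_i$ into the two-alphabet Cauchy identity, whereas the paper cites the specialization directly.
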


\begin{proof}
By Theorem~\ref{thm:imageschur}, we have
\[
\sum_{n=0}^\infty\mathbb{E}\left(A^2_{\lambda}(x):\lambda\vdash n\right)
  \frac{t^n}{n!}
=\sum_{n=0}^\infty\sum_{\lambda\vdash n}F_\lambda^2A_\lambda(x)^2
  \frac{t^n}{(n!)^2}
=\varphi_A\biggl(\sum_\lambda s^2_\lambda t^{|\lambda|}\biggr).
\]
On the other hand, by specializing the Cauchy identity~\cite[I.4 (4.3)]{MacDonald}, we have
\[
\sum_\lambda s^2_\lambda t^{|\lambda|}
  \ =\ \prod_{i=1}^\infty\prod_{j=1}^\infty\frac{1}{1-x_ix_jt}
  \ =\ \exp\biggl(\sum_{k=1}^\infty p_k^2\frac{t^k}{k}\biggr),
\]
the second equality following by the same reasoning we used to prove~\eqref{eq:hp-relation} (cf.~\cite[I.4~(4.1)]{MacDonald}). 
Applying Proposition~\ref{prop:imageofpowersums}, we obtain
\[
\sum_{n=0}^\infty\mathbb{E}\left(A^2_{\lambda}(x):\lambda\vdash n\right)
  \frac{t^n}{n!}
=\exp\biggl( (x+c_1)^2t + \sum_{k=2}^\infty\frac{c_k^2}{(k-1)!}\frac{t^k}{k!}\biggr)
=e^{(x+c_1)^2t}f_B(t),
\]
where $B$ is the Appell sequence defined above.
Since $e^{xt}f_B(t)$ is the exponential generating function for this
sequence, we obtain the claimed result by extracting the coefficient
of $t^n$ in the above identity.
\end{proof}

As a corollary, we have the following property of the variance.

\begin{corollary}\label{cor:variance}
If $A=(A_m)_{m=0}^\infty$ is an Appell sequence, then for $n\ge2$,
\begin{equation*}
\Var\left(A_\lambda(x):\lambda\vdash n\right)
   = \mathbb{E}\left(A^2_{\lambda}(x):\lambda\vdash n\right)
   - \mathbb{E}\bigl(A_{\lambda}(x):\lambda\vdash n\bigr)^2 = O(x^{2n-4}).
\end{equation*}
\end{corollary}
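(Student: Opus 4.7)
The plan is to combine the two moment formulas from Theorems~\ref{thm:average} and~\ref{thm:secondmoment} and observe cancellation of the top two terms in $x$, exploiting the centrality of the auxiliary Appell sequence $B$.

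First, I would square the first-moment identity to get
$\mathbb{E}(A_\lambda(x):\lambda\vdash n)^2 = A_1(x)^{2n} = (x+c_1)^{2n}$, which contributes a term of degree exactly $2n$ in $x$ with leading coefficient~$1$. Then, by Theorem~\ref{thm:secondmoment}, the second moment equals $B_n((x+c_1)^2)$ where $B=(B_m)_{m=0}^\infty$ is the central Appell sequence specified in the statement.

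Next, I would analyze $B_n(y)$ as a polynomial in $y$. Using the explicit expansion \eqref{eq:explicitAppell},
\[
B_n(y) = \sum_{k=0}^n \binom{n}{k} z_k^B\, y^{n-k},
\]
where $z_k^B = B_k(0)$. Since $B$ is central, $z_1^B = B_1(0) = 0$, and of course $z_0^B = 1$. Hence $B_n(y) = y^n + R(y)$ with $\deg R \le n-2$. Substituting $y = (x+c_1)^2$ gives
\[
B_n((x+c_1)^2) = (x+c_1)^{2n} + R((x+c_1)^2),
\]
and $R((x+c_1)^2)$ is a polynomial in $x$ of degree at most $2n-4$.

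Finally, subtracting the two expressions, the leading $(x+c_1)^{2n}$ terms cancel and what remains is $R((x+c_1)^2)$, which is $O(x^{2n-4})$ as $x\to\infty$. This proves the claim. There is no real obstacle here; the only conceptual point worth noting is that the vanishing of the $y^{n-1}$ coefficient of $B_n(y)$ (and hence the cancellation needed to drop from degree $2n-2$ to $2n-4$) is precisely the centrality of $B$, which is built into Theorem~\ref{thm:secondmoment} via the fact that the sum defining $\log f_B(t)$ starts at $k=2$.
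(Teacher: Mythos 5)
Your proposal is correct and follows essentially the same route as the paper's proof: combine Theorems~\ref{thm:average} and~\ref{thm:secondmoment} to write the variance as $B_n((x+c_1)^2)-(x+c_1)^{2n}$, then use~\eqref{eq:explicitAppell} together with the centrality of $B$ (i.e., $B_1(0)=0$) to get $B_n(y)=y^n+O(y^{n-2})$, so the leading terms cancel and the remainder is $O(x^{2n-4})$. Your write-up just spells out the same cancellation in slightly more detail.
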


\begin{proof}
Combining Theorems~\ref{thm:average} and~\ref{thm:secondmoment}, we have
\begin{equation*}
	\Var(A_\lambda(x):\lambda\vdash n)
		= B_n((x+c_1)^2) - (x+c_1)^{2n}.
\end{equation*}
On the other hand, since $B$ is central, we
have $B_n(x)=x^n+O(x^{n-2})$ for $n\geq2$ (recall~\eqref{eq:explicitAppell}),
and the result follows.
\end{proof}

Higher moments can be derived from~\cite[Ex 7.70]{Stanley_EC2}, though a closed formula appears unlikely except in special cases.

\subsection{Dual Appell sequences}
\label{sec:dualappell}
Recall from the discussion at the end of
Section~\ref{subsec:symmetricfunctions}
that there is a ring involution $\omega$ of $\Lambda$ such that
$\omega(h_m)=e_m$, $\omega(p_m)=(-1)^{m-1}p_m$,
and $\omega(s_\lambda)=s_{\lambda'}$. This allows us to deduce
that for any Appell sequence $A$, there is a second Appell sequence $A^*$
hidden within the associated net of Wronskian Appell polynomials
generated by $A$.

\begin{theorem}
\label{thm:dualWronskian}
If $A=(A_n)_{n=0}^\infty$ is an Appell sequence, then
\begin{itemize}
\item[\textup{(a)}] the sequence $A^*=(A^*_n)_{n=0}^\infty$ defined by setting $A^*_n=A_{(1^n)}$ is also an Appell sequence,
\item[\textup{(b)}] the Wronskian Appell polynomials for $A$ and $A^*$ satisfy $A^*_\lambda=A_{\lambda'}$ for all $\lambda\in\mathbb{Y}$, and
\item[\textup{(c)}] the exponential generating functions $f_A(t)$ and $f_{A^*}(t)$ are related by $f_{A^*}(t)=1/f_A(-t)$, or equivalently,
\[
\log f_A(t)=\sum_{k=1}^\infty c_k\frac{t^k}{k!}
\quad\text{implies}\quad
\log f_{A^*}(t)=\sum_{k=1}^\infty (-1)^{k-1}c_k\frac{t^k}{k!}.
\]
\end{itemize}
\end{theorem}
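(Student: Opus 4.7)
The overall strategy is to exploit the involution $\omega$ on $\Lambda$ by comparing $\varphi_A \circ \omega$ with $\varphi_{A^*}$. Once (a) is established so that $\varphi_{A^*}$ is defined, the identity $\varphi_{A^*} = \varphi_A \circ \omega$ will be forced by the universal property of $(h_m)_{m=1}^\infty$, and both (b) and (c) will drop out by applying this identity to Schur functions and power sums, respectively.

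For part (a), I would specialize Theorem~\ref{thm:WAPisAN} to $\lambda = (1^n)$. Since $(1^n)$ covers only $(1^{n-1})$ in Young's lattice and $F_{(1^n)} = F_{(1^{n-1})} = 1$, the identity \eqref{eq:AppellNet} immediately collapses to
\[
A_{(1^n)}' = n\, A_{(1^{n-1})},
\]
which together with $A^*_0 = A_\emptyset = 1$ gives the two defining conditions of an Appell sequence.

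For part (b), now that $A^*$ is Appell, the homomorphism $\varphi_{A^*}: \Lambda \to \mathbb{R}[x]$ is defined by \eqref{eq:defconnecting}. Consider $\psi := \varphi_A \circ \omega$. Since $\omega(h_n) = e_n$, Corollary~\ref{cor:imageelementary} gives
\[
\psi(h_n) \;=\; \varphi_A(e_n) \;=\; \frac{A_{(1^n)}}{n!} \;=\; \frac{A^*_n}{n!} \;=\; \varphi_{A^*}(h_n),
\]
and since $(h_n)_{n=1}^\infty$ freely generates $\Lambda$, we conclude $\varphi_{A^*} = \varphi_A \circ \omega$. Applying both sides to $s_\lambda$ and using $\omega(s_\lambda) = s_{\lambda'}$ together with Theorem~\ref{thm:imageschur} yields
\[
\frac{F_\lambda A^*_\lambda}{|\lambda|!} \;=\; \varphi_{A^*}(s_\lambda) \;=\; \varphi_A(s_{\lambda'}) \;=\; \frac{F_{\lambda'} A_{\lambda'}}{|\lambda'|!}.
\]
Since $|\lambda| = |\lambda'|$ and $F_\lambda = F_{\lambda'}$ (conjugation is a bijection on standard tableaux), (b) follows.

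For part (c), I would apply the same identity $\varphi_{A^*} = \varphi_A \circ \omega$ to the power sums. Using $\omega(p_m) = (-1)^{m-1} p_m$ and Proposition~\ref{prop:imageofpowersums}, we obtain $\varphi_{A^*}(p_1) = \varphi_A(p_1) = x + c_1$ and $\varphi_{A^*}(p_m) = (-1)^{m-1} c_m/(m-1)!$ for $m \geq 2$. Reading off the cumulants of $A^*$ from the table of images gives $c_1^{A^*} = c_1$ and $c_m^{A^*} = (-1)^{m-1} c_m$ for $m \geq 2$, i.e.\ $c_m^{A^*} = (-1)^{m-1} c_m$ in all cases. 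Therefore
\[
\log f_{A^*}(t) = \sum_{k=1}^\infty (-1)^{k-1} c_k \frac{t^k}{k!} = -\sum_{k=1}^\infty c_k \frac{(-t)^k}{k!} = -\log f_A(-t),
\]
which is exactly $f_{A^*}(t) = 1/f_A(-t)$. The main conceptual step is recognizing that $\varphi_A \circ \omega$ is itself of the form $\varphi_B$ for some Appell sequence $B$; once (a) pins down that $B = A^*$, the remaining parts are essentially translations between symmetric function identities and polynomial identities via the dictionary in Section~\ref{sec:images}.
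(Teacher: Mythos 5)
Your proposal is correct and follows essentially the same route as the paper: part (a) via Theorem~\ref{thm:WAPisAN} at $\lambda=(1^n)$, and part (b) via the identity $\varphi_{A^*}=\varphi_A\circ\omega$ forced on the generators $h_n$ by Corollary~\ref{cor:imageelementary}. For part (c) the paper simply applies $\varphi_A$ to the relation $H(t)=1/E(-t)$ in~\eqref{eq:hp-relation}, whereas you pass through $\omega(p_m)=(-1)^{m-1}p_m$ and read off the cumulants of $A^*$ from Proposition~\ref{prop:imageofpowersums}; these are two equivalent faces of the same identity, and your version is equally valid.
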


\begin{proof}
For every $n\ge0$, we have $F_{(1^n)}=1$. Furthermore, the only partition
that is covered by $(1^n)$ in Young's lattice is $(1^{n-1})$.
Theorem~\ref{thm:WAPisAN} therefore implies
\[
(A_n^*)'=A_{(1^n)}'=n A_{(1^{n-1})}=n A_{n-1}^*
  \qquad(n\ge1),
\]
proving~(a). Now consider that
Corollary~\ref{cor:imageelementary} implies
\[
(\varphi_A \circ \omega)(h_n)=\varphi_A(e_n)
   = \frac{A_n^*}{n!}=\varphi_{A^*}(h_n).
\]
Since $(h_n)_{n=0}^\infty$ generates $\Lambda$, it follows more generally
that $\varphi_{A^*}(g) = (\varphi_A \circ \omega)(g)$ for all $g\in\Lambda$.
Recalling that $\omega(s_\lambda)=s_{\lambda'}$ for all $\lambda\in\mathbb{Y}$, we obtain
\[
\frac{F_\lambda A_\lambda^*}{\lvert \lambda \rvert!}
 = \varphi_{A^*}(s_\lambda)
 = (\varphi_A \circ \omega)(s_\lambda)
 = \varphi_A(s_{\lambda'})
 = \frac{F_{\lambda'} A_{\lambda'}}{\lvert \lambda' \rvert!}.
\]
Since $F_{\lambda'}=F_\lambda$ and $\lvert \lambda'\rvert=\lvert \lambda \rvert$, this proves~(b).
Part~(c) follows by applying $\varphi_A$ to~\eqref{eq:hp-relation}.
\end{proof}

We call $A^*$ the Appell sequence \textbf{dual} to $A$.

If we specialize Theorem~\ref{thm:dualWronskian} to the setting of Hermite polynomials, we recover \cite[Theorem~1.2]{Curbera_Duran} and \cite[Corollary 2]{GomezUllate_Grandati_Milson-16}.
Moreover, similar results are derived in other settings as well, for example see \cite[Theorem 6.1 and Theorem 8.1]{Curbera_Duran} or \cite[Lemma 2.7]{Bonneux} and \cite[Lemma 5]{Bonneux_Kuijlaars}. These cited results relate a Wronskian corresponding to two partitions to the Wronskian corresponding to the conjugated partitions. Similar identities with a combinatorial interpretation in terms of Maya diagrams can be found in \cite{GomezUllate_Grandati_Milson-16,GomezUllate_Grandati_Milson-L+J}.

%
\begin{corollary}
\label{cor:doubledual}
If $(A_n)_{n=0}^\infty$ is an Appell sequence, then
$A_n^{**}=A_n$ for all $n\geq0$.
\end{corollary}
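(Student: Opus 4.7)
The plan is to deduce this corollary directly from Theorem~\ref{thm:dualWronskian} without any new calculation, using either part~(b) or part~(c). Both approaches are very short, so I would present the partition-theoretic one as the main argument and mention the generating function one as an alternative.

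First I would observe that by the definition $A_n^* := A_{(1^n)}$ from part~(a), the star operation is just the restriction to single-column shapes of the rule $\lambda \mapsto \lambda'$ appearing in part~(b). Applying part~(b) of Theorem~\ref{thm:dualWronskian} to the Appell sequence $A$ with the partition $\lambda = (1^n)$ gives
\[
A^{**}_n = A^*_{(1^n)} = A_{(1^n)'} = A_{(n)} = A_n,
\]
where the third equality uses $(1^n)' = (n)$ and the fourth is the identity $A_{(n)} = A_n$ noted at the end of Section~\ref{sec:Appelldefs}. This handles all $n \geq 1$; the case $n = 0$ is immediate from $A_0 = A_0^* = 1$.

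Alternatively, one can use part~(c): if $\log f_A(t) = \sum_{k \geq 1} c_k t^k/k!$, then applying the $*$-operation twice gives
\[
\log f_{A^{**}}(t) = \sum_{k=1}^\infty (-1)^{k-1}(-1)^{k-1} c_k \frac{t^k}{k!} = \log f_A(t),
\]
so $f_{A^{**}} = f_A$, and since an Appell sequence is uniquely determined by its exponential generating function via~\eqref{eq:expgenfunAppell}, we conclude $A_n^{**} = A_n$.

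There is no genuine obstacle here; the whole content is that conjugation of partitions is an involution (equivalently, that $\omega^2 = \mathrm{Id}$ on $\Lambda$), and both proofs simply record this fact in the two languages provided by Theorem~\ref{thm:dualWronskian}. I would favor the first proof for brevity and because it makes transparent that the corollary is really the statement $A^{**}_\lambda = A_\lambda$ for every $\lambda \in \mathbb{Y}$, not just single-row shapes.
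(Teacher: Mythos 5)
Your proof is correct and matches the paper's intent exactly: the paper states Corollary~\ref{cor:doubledual} without proof as an immediate consequence of Theorem~\ref{thm:dualWronskian}, and your chain $A^{**}_n = A^*_{(1^n)} = A_{(1^n)'} = A_{(n)} = A_n$ via part~(b) (or the sign-cancellation via part~(c)) is precisely the intended one-line deduction. Nothing is missing.
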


\begin{corollary}\label{cor:dual}
If $(A_n)_{n=0}^{\infty}$ is an Appell sequence,
the following statements are equivalent:
\begin{itemize}
	\item [\textup{(a)}] For all integers $n\geq0$, $A_n=A^*_n$, i.e. the Appell sequence is self-dual.
	\item [\textup{(b)}] For all $\lambda\in\mathbb{Y}$, $A_{\lambda}=A_{\lambda'}$.
	\item[\textup{(c)}] For all even integers $n>0$, $c_n=0$.
\end{itemize}
\end{corollary}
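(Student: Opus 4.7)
The plan is to reduce everything to Theorem~\ref{thm:dualWronskian}, which already carries the essential content. First I would establish (b) $\Rightarrow$ (a) by specialising the identity $A_\lambda = A_{\lambda'}$ to the one-row partition $\lambda = (n)$; since $(n)' = (1^n)$, this reads $A_n = A_{(1^n)}$, and the latter is $A_n^*$ by the very definition of the dual sequence in Theorem~\ref{thm:dualWronskian}(a).

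For (a) $\Rightarrow$ (b), I would observe that the Wronskian construction $\lambda \mapsto A_\lambda$ depends only on the underlying Appell sequence. Consequently, if $A_n = A_n^*$ for every $n$, then applying the same construction to both sequences gives $A_\lambda = A_\lambda^*$ for every partition $\lambda \in \mathbb{Y}$. Combining this with Theorem~\ref{thm:dualWronskian}(b), which identifies $A_\lambda^*$ with $A_{\lambda'}$, yields (b).

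For (a) $\Leftrightarrow$ (c), I would invoke the fact that an Appell sequence is determined by its exponential generating function $f_A(t)$, and hence by the cumulant coefficients $(c_k)_{k\geq 1}$ appearing in $\log f_A(t)$ via \eqref{eq:logofgenerating}. By Theorem~\ref{thm:dualWronskian}(c), the cumulants of $A^*$ are precisely $((-1)^{k-1} c_k)_{k\geq 1}$. Hence $A = A^*$ is equivalent to $c_k = (-1)^{k-1} c_k$ for every $k \geq 1$; this condition is automatic for odd $k$ and reduces to $c_k = 0$ for even $k$, which is (c).

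There is no genuine obstacle here: the whole statement is a bookkeeping consequence of Theorem~\ref{thm:dualWronskian}. The only point worth emphasising is that part (a), which a priori asserts only the equality of the two one-parameter families $(A_n)$ and $(A_n^*)$, automatically lifts to the full equality of the two Wronskian nets, because the Wronskian Appell polynomial is functorial in its input sequence.
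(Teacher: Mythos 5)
Your proof is correct and follows exactly the route the paper intends: the corollary is stated without proof as an immediate consequence of Theorem~\ref{thm:dualWronskian}, and your bookkeeping---specialising (b) to $\lambda=(n)$, lifting (a) to the whole net by applying the Wronskian construction to the coinciding sequences, and comparing cumulants via part~(c)---is precisely that intended derivation.
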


\subsection{Integer coefficients}
\label{sec:integercoefficients}

Previously, the first and last authors conjectured that Wronskian Hermite polynomials have integer coefficients~\cite[Conjecture 3.7]{Bonneux_Stevens}.
The following result provides a sufficient condition on Appell polynomials
so that the associated Wronskian Appell polynomials have integer coefficients;
it confirms their conjecture as a special case (see the discussion in Section~\ref{sec:specificAppellsequences}).

\begin{theorem}
\label{thm:integercoefficients}
Let $(A_n)_{n=0}^\infty$ be an Appell sequence with $c_k$  as in \eqref{eq:logofgenerating}. If $c_k/(k-1)! \in \mathbb{Z}$ for all $k\ge1$, then $A_\lambda \in \mathbb{Z}[x]$ for all $\lambda\in\mathbb{Y}$.
\end{theorem}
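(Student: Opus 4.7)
The plan is to show that the augmented Schur function $\tilde{s}_\lambda$ lies in the subring $\mathbb{Z}[p_1,p_2,\dotsc]\subset\Lambda_{\mathbb{Q}}$, and then to transport this integrality across $\varphi_A$. Under the hypothesis $c_k/(k-1)!\in\mathbb{Z}$ for all $k\ge1$, Proposition~\ref{prop:imageofpowersums} gives $\varphi_A(p_k)\in\mathbb{Z}[x]$ for every $k\ge1$ (the case $k=1$ using that $c_1/0!=c_1\in\mathbb{Z}$). Since $A_\lambda=\varphi_A(\tilde{s}_\lambda)$ by Theorem~\ref{thm:imageschur} and $\varphi_A$ is a ring homomorphism, an expression of $\tilde{s}_\lambda$ as an integer polynomial in the $p_k$'s will immediately force $A_\lambda\in\mathbb{Z}[x]$.

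To carry out the first step, I would use the classical power-sum expansion
\[
s_\lambda=\sum_{\mu\vdash n}\frac{\chi^\lambda_\mu}{z_\mu}p_\mu,
\]
where $\chi^\lambda_\mu$ is the irreducible character of $S_n$ indexed by $\lambda$ evaluated on the cycle type $\mu$, and $z_\mu=n!/|K_\mu|$ with $K_\mu$ the corresponding conjugacy class. Multiplying through by $H(\lambda)=n!/F_\lambda$ yields
\[
\tilde{s}_\lambda=\sum_{\mu\vdash n}\frac{|K_\mu|\chi^\lambda_\mu}{F_\lambda}\,p_\mu,
\]
so the required integrality of $\tilde{s}_\lambda$ in the $p_k$'s reduces to showing that $\omega^\lambda_\mu:=|K_\mu|\chi^\lambda_\mu/F_\lambda$ is an integer for every $\mu\vdash n$.

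This is the main obstacle and the real content of the argument. The numbers $\omega^\lambda_\mu$ are precisely the scalars by which the class sums $\sum_{g\in K_\mu}g$, lying in the center of $\mathbb{Z}[S_n]$, act on the Specht module $S^\lambda$ (Schur's lemma). Because class sums are integral over $\mathbb{Z}$ inside $\mathbb{Z}[S_n]$, each $\omega^\lambda_\mu$ is an algebraic integer; because symmetric-group characters are rational integers, $\omega^\lambda_\mu$ is also rational, hence $\omega^\lambda_\mu\in\mathbb{Z}$. This is a classical fact for which I would simply cite a standard reference (e.g.\ Isaacs or Serre).

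Combining the ingredients: $\tilde{s}_\lambda$ is an integer polynomial in $(p_k)_{k\ge1}$, and the hypothesis guarantees $\varphi_A(p_k)\in\mathbb{Z}[x]$ for each $k$, so $A_\lambda=\varphi_A(\tilde{s}_\lambda)\in\mathbb{Z}[x]$. A purely in-paper alternative would be to instead induct on $|\lambda|$ using the forthcoming generating recurrence (Theorem~\ref{thm:generatingrecurrence}), whose coefficients are built from the $\varphi_A(p_k)$; under the hypothesis these coefficients lie in $\mathbb{Z}[x]$, and the induction goes through once the base cases $A_\emptyset=1$ and $A_{(1)}=x+c_1$ are checked.
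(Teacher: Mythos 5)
Your main argument is correct and is essentially the paper's proof: the paper likewise expands $H(\lambda)s_\lambda=\sum_{\mu\vdash n}d_{\lambda\mu}p_\mu$ with $d_{\lambda\mu}\in\mathbb{Z}$ and applies $\varphi_A$ together with Proposition~\ref{prop:imageofpowersums}, the only difference being that it cites Macdonald [I.7, Ex.~17(a)] for that integrality whereas you supply the standard central-character proof of that same fact. One caution about your proposed alternative: inducting with Theorem~\ref{thm:generatingrecurrence} only yields $F_\lambda A_\lambda\in\mathbb{Z}[x]$ at each step, and dividing by $F_\lambda$ to recover $A_\lambda\in\mathbb{Z}[x]$ is precisely what would need justification, so that route does not close as stated.
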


\begin{proof}
Fix $\lambda \vdash n$ and recall from Theorem \ref{thm:imageschur} that $\varphi_{A}(H(\lambda) s_\lambda) = A_\lambda$.
It is known that if the Schur function $s_\lambda$ is rescaled by the
factor $H(\lambda)$, the result is a polynomial in power sums with
integer coefficients (see \cite[I.7 Ex.~17(a)]{MacDonald}). In other words,
there exist integers $d_{\lambda\mu}$ such that
\[
H(\lambda)s_\lambda = \sum_{\mu \vdash n} d_{\lambda\mu} p_\mu.
\]
On the other hand, Proposition~\ref{prop:imageofpowersums} and the
stated hypothesis imply that $\varphi_A(p_1)=x+c_1\in\mathbb{Z}[x]$
and $\varphi_A(p_k)=c_k/(k-1)!\in\mathbb{Z}$ for $k>1$.
\end{proof}

\section{Rim hooks and recurrence relations}
\label{sec:rimandrecurrence}

A \textbf{rim hook} of size $k$ is a skew diagram $\lambda/\mu$ that is connected and
does not contain any $2 \times 2$ squares such that $|\lambda|-|\mu|=k$.
Its \textbf{height}, denoted $\htt(\lambda/\mu)$,
is one less than the number of rows it occupies. We set
\begin{align*}
\rhup{k}{\mu}&:=\{\lambda\in\mathbb{Y}:\lambda/\mu\text{ is a rim hook of size $k$}\},\\
\rhdown{k}{\lambda}&:=\{\mu\in\mathbb{Y}:\lambda/\mu\text{ is a rim hook of size $k$}\}.
\end{align*}
Note that a rim hook $\lambda/\mu$ of size~1 is simply a covering
pair in Young's lattice.

A rim hook with a fixed outer shape $\lambda$ has at most one cell on
each northwest-southeast diagonal and thus is determined by the highest
row and leftmost column it occupies.
Conversely, for each cell $(i,j)\in D_\lambda$, there is a rim
hook $\lambda/\mu$ with highest row $i$ and leftmost column $j$,
and its size is the hook length $h(i,j)$.
This bijection between the cells of $D_\lambda$ and $\lambda$-bounded
rim hooks is illustrated below for $\lambda=(5,3,2)$ where cells are
marked with a bullet and rim hooks are shaded gray.

\begin{center}	
	\begin{footnotesize}
	\begin{tabular}{l l l l l}
\ydiagram[*(lightgray) \bullet]{4+1,3+0,2+0}
		*[*(white)]{5,3,2}
		*[*(white) ]{4+1,3+0,2+0}
		&
		\ydiagram[*(lightgray) \bullet]{5+0,2+1,2+0}
		*[*(white)]{5,3,2}
		&
		\ydiagram[*(lightgray) \bullet]{5+0,3+0,1+1}
		*[*(white)]{5,3,2}
		\vspace{0.25cm}
		&
		\ydiagram[*(lightgray) \bullet]{3+1,3+0,2+0}
		*[*(lightgray)]{3+2,3+0,2+0}
		*[*(white)]{5,3,2}
		&
		\ydiagram[*(lightgray) \bullet]{5+0,3+0,0+1}
		*[*(lightgray)]{5+0,3+0,0+2}
		*[*(white)]{5,3,2}
		\\
		\ydiagram[*(lightgray) \bullet]{5+0,1+1,2+0}
		*[*(lightgray)]{5+0,1+2,1+1}
		*[*(white)]{5,3,2}
		&
		\ydiagram[*(lightgray) \bullet]{2+1,3+0,2+0}
		*[*(lightgray)]{2+3,2+1,2+0}
		*[*(white)]{5,3,2}
		&
		\ydiagram[*(white) \bullet]{5+0,0+1,2+0}
		*[*(lightgray)]{5+0,1+2,0+2}
		*[*(white)]{5,3,2}
		&
        \ydiagram[*(white) \bullet]{1+1,3+0,2+0}
		*[*(lightgray)]{2+3,1+2,1+1}
		*[*(white)]{5,3,2}
		&\ydiagram[*(white) \bullet]{0+1,3+0,2+0}
		*[*(lightgray)]{2+3,1+2,0+2}
		*[*(white)]{5,3,2}
	\end{tabular}
	\end{footnotesize}
	\end{center}
The rim hooks in the first row above all have height~0,
while the first three in the second row have height 1 and the last two
have height~2.

The Murnaghan-Nakayama Rule~\cite[I.7 Ex. 5]{MacDonald}
uses rim hooks to provide a combinatorial formula for multiplying a Schur function by a power sum; namely,
\begin{equation}\label{eq:MNRule}
p_k s_\lambda \ = \sum_{\gamma\in\rhup{k}\lambda}
  (-1)^{\htt(\gamma/\lambda)} s_\gamma.
\end{equation}
The following identity may be viewed as providing a one-sided inverse to~\eqref{eq:MNRule}. We have not seen it elsewhere in the literature on symmetric functions.

\begin{theorem}
\label{thm:symmetricgeneratingrecurrence}
For $\lambda \vdash n$, we have
\begin{equation}
\label{eq:SchurNewton}
n s_\lambda = \sum_{k=1\vphantom{\rhdown{k}\mu}}^n
  \ \sum_{\mu\in\rhdown{k}\lambda} (-1)^{\htt(\lambda/\mu)} p_ks_\mu.
\end{equation}
\end{theorem}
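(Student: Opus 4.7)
My approach is to derive~\eqref{eq:SchurNewton} from two general facts: the Euler homogeneity identity and an adjoint form of the Murnaghan-Nakayama rule~\eqref{eq:MNRule}.

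Treating each $p_k$ as an independent variable of degree $k$, the ring $\Lambda_{\mathbb{Q}}$ becomes graded and $s_\lambda$ is homogeneous of degree $|\lambda|=n$. The usual Euler identity for homogeneous polynomials then yields
\[
n\, s_\lambda \ =\ \sum_{k\ge 1} k\, p_k\, \frac{\partial s_\lambda}{\partial p_k},
\]
in which only finitely many terms are nonzero. This parallels the way~\eqref{eq:partial-phi} was exploited in the proof of Theorem~\ref{thm:WAPisAN}.

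The heart of the argument is to establish the skewing identity
\[
k\,\frac{\partial s_\lambda}{\partial p_k}
  \ =\ \sum_{\mu\in\rhdown{k}\lambda}(-1)^{\htt(\lambda/\mu)}\,s_\mu,
\]
after which substitution into the Euler identity gives~\eqref{eq:SchurNewton} at once. One route is to invoke the fact that $k\,\partial/\partial p_k$ is the Hall-inner-product adjoint of multiplication by $p_k$; orthonormality of the Schur basis together with~\eqref{eq:MNRule} then shows that the coefficient of $s_\mu$ on the left equals the coefficient of $s_\lambda$ in $p_k s_\mu$, which is $(-1)^{\htt(\lambda/\mu)}$ exactly when $\lambda/\mu$ is a rim hook of size $k$, and zero otherwise. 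A more self-contained route is to differentiate the Jacobi-Trudi determinant~\eqref{eq:firstJacobiTrudi} using $\partial h_m/\partial p_k = h_{m-k}/k$ (an immediate consequence of~\eqref{eq:hp-relation}) and, for each of the $\ell(\lambda)$ resulting determinants, sort the shifted index sequence back into strictly decreasing order while tracking the signs from adjacent transpositions; rows whose shifted index coincides with another contribute zero.

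The main obstacle is this skewing identity: in the Jacobi-Trudi derivation one must check that the number of adjacent transpositions needed to restore partition order reproduces the rim-hook height exactly, which is essentially the combinatorial core of the Murnaghan-Nakayama rule itself. With the adjoint-operator viewpoint this step is almost cost-free, so opting for that route keeps the proof short while the Jacobi-Trudi route has the advantage of remaining within the algebraic framework already developed in the paper.
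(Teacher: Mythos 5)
Your proposed argument is correct and coincides with the paper's own proof: both use the Euler homogeneity identity together with the fact that $k\,\partial/\partial p_k$ is the Hall-inner-product adjoint of multiplication by $p_k$, so that the Murnaghan--Nakayama rule~\eqref{eq:MNRule} yields the skewing identity $k\,\partial s_\lambda/\partial p_k=\sum_{\mu\in\rhdown{k}\lambda}(-1)^{\htt(\lambda/\mu)}s_\mu$. The alternative Jacobi--Trudi route you sketch is not needed, and you correctly identify its sign-bookkeeping as the nontrivial part; the adjoint route you select is exactly what the paper does.
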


Note that the special case $\lambda=(1^n)$ of~\eqref{eq:SchurNewton}
yields Newton's identities; namely,
\[
n e_n = \sum_{k=1}^n (-1)^{k-1} p_ke_{n-k}.
\]

\begin{proof}
Let $\iprod{\cdot\,{,}\,\cdot}$ denote the standard inner product
on $\Lambda_{\mathbb{Q}}$ relative to which the Schur functions
$(s_\nu)_{\nu\in\mathbb{Y}}$ are orthonormal.
As in the proof of Theorem~\ref{thm:WAPisAN}, we may regard each
$f\in\Lambda_{\mathbb{Q}}$ as a polynomial in finitely many of
the variables $(p_k)_{k=1}^\infty$ and apply differential operators
with respect to such variables. In these terms, it is known
(see~\cite[I.5 Ex.~3(c)]{MacDonald}) that the
operator $k\partial/\partial p_k$ is adjoint to multiplication by $p_k$; i.e.,
\[
\iprod[\Big]{k\frac{\partial f}{\partial p_k}\,,\, g}\ = \iprod{f\,,\, p_kg}
  \qquad (f,g\in\Lambda_{\mathbb{Q}}).
\]
Applying~\eqref{eq:MNRule}, we obtain
\begin{equation}\label{eq:diffSchur}
k\frac{\partial s_\lambda}{\partial p_k}
  \ = \sum_{\mu\vdash n-k}\iprod[\Big]{k\frac{\partial s_\lambda}{\partial p_k}\,,\, s_\mu}s_\mu
  \ = \sum_{\mu\vdash n-k}\iprod{s_\lambda\,,\,p_ks_\mu}s_\mu
  \ = \sum_{\mu\in\rhdown{k}\lambda}(-1)^{\htt(\lambda/\mu)}s_\mu.
\end{equation}
On the other hand, any $p$-monomial $f=p_1^{m_1}p_2^{m_2}\cdots$ is
homogeneous of degree $n=\sum km_k$, from which it follows directly that
\[
nf = \sum_{k=1}^nkp_k\frac{\partial f}{\partial p_k},
\]
and hence also for any $f\in\Lambda_{\mathbb{Q}}$ that is homogeneous of degree~$n$. Specializing to the case $f=s_\lambda$ and applying~\eqref{eq:diffSchur} yields the claimed identity.
\end{proof}

\subsection{Top-down relations}
\label{sec:topdowns}

The following result generalizes \cite[Theorem 3.2]{Bonneux_Stevens}.

\begin{theorem}
\label{thm:topdown}
If $(A_n)_{n=0}^\infty$ is an Appell sequence with $c_n$ as in~\eqref{eq:logofgenerating}, then for all $\lambda \in \mathbb{Y}$,
\begin{align}
(\lvert \lambda \rvert +1) (x+c_1) F_\lambda A_\lambda &=\ \ \sum_{\gamma \cov \lambda} F_\gamma A_\gamma,\label{eq:topdown1}\\
k c_k \binom{\lvert \lambda \rvert +k}{k} F_\lambda A_\lambda &= \sum_{\gamma\in\rhup{k}\lambda} (-1)^{\htt(\gamma/\lambda)} F_\gamma A_\gamma \qquad (k \geq 2).\label{eq:topdown2}
\end{align}
\end{theorem}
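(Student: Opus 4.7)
The plan is to apply the homomorphism $\varphi_A$ to the Murnaghan--Nakayama Rule \eqref{eq:MNRule} and then reinterpret both sides via Theorem~\ref{thm:imageschur} and Proposition~\ref{prop:imageofpowersums}, splitting into cases according to whether $k=1$ or $k\ge2$. This is exactly the kind of transfer argument the paper emphasizes: a Schur-level identity becomes an Appell-level identity by pushing it through $\varphi_A$.

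Concretely, fix $k\ge 1$ and $\lambda\in\mathbb{Y}$. Applying $\varphi_A$ to both sides of \eqref{eq:MNRule} and using Theorem~\ref{thm:imageschur}, together with the fact that $|\gamma|=|\lambda|+k$ for every $\gamma\in\rhup{k}{\lambda}$, gives
\[
\varphi_A(p_k)\cdot\frac{F_\lambda A_\lambda}{|\lambda|!}
\ =\ \sum_{\gamma\in\rhup{k}{\lambda}}(-1)^{\htt(\gamma/\lambda)}\,
   \frac{F_\gamma A_\gamma}{(|\lambda|+k)!}.
\]
Multiplying through by $(|\lambda|+k)!$ yields
\[
\frac{(|\lambda|+k)!}{|\lambda|!}\,\varphi_A(p_k)\,F_\lambda A_\lambda
\ =\ \sum_{\gamma\in\rhup{k}{\lambda}}(-1)^{\htt(\gamma/\lambda)}F_\gamma A_\gamma.
\]
For $k=1$, Proposition~\ref{prop:imageofpowersums} gives $\varphi_A(p_1)=x+c_1$ and the prefactor is $|\lambda|+1$; moreover, a rim hook of size~$1$ is just a covering relation $\gamma\cov\lambda$ (of height $0$, hence sign $+1$), so \eqref{eq:topdown1} drops out immediately. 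For $k\ge 2$, Proposition~\ref{prop:imageofpowersums} gives $\varphi_A(p_k)=c_k/(k-1)!$, and the binomial simplification
\[
\frac{(|\lambda|+k)!}{(k-1)!\,|\lambda|!}\ =\ k\binom{|\lambda|+k}{k}
\]
turns the left-hand side into $kc_k\binom{|\lambda|+k}{k}F_\lambda A_\lambda$, yielding \eqref{eq:topdown2}.

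There is no real obstacle here; the two cases of the theorem reflect precisely the two cases in the formula for $\varphi_A(p_k)$, and the only work is the factorial bookkeeping. The conceptual content of the proof is entirely contained in the Murnaghan--Nakayama Rule together with the dictionary provided by $\varphi_A$.
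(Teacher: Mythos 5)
Your proof is correct and follows exactly the paper's own argument: apply $\varphi_A$ to the Murnaghan--Nakayama Rule \eqref{eq:MNRule} and simplify using Theorem~\ref{thm:imageschur} and Proposition~\ref{prop:imageofpowersums}. The factorial bookkeeping, including the identity $\frac{(|\lambda|+k)!}{(k-1)!\,|\lambda|!}=k\binom{|\lambda|+k}{k}$ and the observation that size-one rim hooks are covers of height zero, is all accurate.
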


\begin{proof}
Apply $\varphi_A$ to~\eqref{eq:MNRule} and use Theorem~\ref{thm:imageschur}
and Proposition~\ref{prop:imageofpowersums} to simplify the result.
\end{proof}

\subsection{The generating recurrence relation}
\label{sec:generatingrecurrence}

In previous work, the first and last authors proved identities for Wronskian Hermite polynomials using a relation they referred to as the generating recurrence relation~\cite[Theorem 3.1]{Bonneux_Stevens}. The following result extends it to all Appell sequences.

\begin{theorem}
\label{thm:generatingrecurrence}
If $(A_n)_{n=0}^\infty$ is an Appell sequence with $c_k$ as
in~\eqref{eq:logofgenerating} and $\lambda \vdash n \geq 1$, then
\begin{equation}
\label{eq:generatingrecursion}
F_\lambda A_\lambda = x \sum_{\mu \covby \lambda} F_\mu A_\mu \ +\ \sum_{k=1}^{n} c_k\binom{n -1}{k-1} \sum_{\nu\in\rhdown{k}\lambda} (-1)^{\htt(\lambda/\nu)} F_\nu A_\nu.
\end{equation}
\end{theorem}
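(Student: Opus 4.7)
The plan is to apply the homomorphism $\varphi_A$ to the symmetric function identity from Theorem~\ref{thm:symmetricgeneratingrecurrence} and then simplify using Theorem~\ref{thm:imageschur} and Proposition~\ref{prop:imageofpowersums}. This is the same template used to deduce Theorem~\ref{thm:topdown} from the Murnaghan-Nakayama Rule, so one should expect a mostly mechanical translation.

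Concretely, I start from
\[
n s_\lambda \ =\ \sum_{k=1}^n\ \sum_{\nu\in\rhdown{k}\lambda}(-1)^{\htt(\lambda/\nu)}p_ks_\nu,
\]
apply $\varphi_A$ term by term, and use Theorem~\ref{thm:imageschur} to rewrite $\varphi_A(s_\mu)=F_\mu A_\mu/|\mu|!$ for each $\mu$ appearing. Since $\varphi_A$ is a ring homomorphism, each summand becomes $\varphi_A(p_k)\cdot F_\nu A_\nu/(n-k)!$. By Proposition~\ref{prop:imageofpowersums}, $\varphi_A(p_1)=x+c_1$ while $\varphi_A(p_k)=c_k/(k-1)!$ for $k\ge 2$. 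The left side becomes $F_\lambda A_\lambda/(n-1)!$, and multiplying through by $(n-1)!$ converts the factorial ratio $(n-1)!/[(k-1)!(n-k)!]$ into the binomial coefficient $\binom{n-1}{k-1}$, yielding
\[
F_\lambda A_\lambda = (x+c_1)\sum_{\mu\covby\lambda}F_\mu A_\mu
 + \sum_{k=2}^{n}c_k\binom{n-1}{k-1}\sum_{\nu\in\rhdown{k}\lambda}(-1)^{\htt(\lambda/\nu)}F_\nu A_\nu.
\]

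Finally, I need only observe that the $k=1$ case of the inner sum on the right-hand side of the target identity~\eqref{eq:generatingrecursion} contributes $c_1\binom{n-1}{0}\sum_{\mu\covby\lambda}F_\mu A_\mu$, since rim hooks of size~1 are precisely cover relations in Young's lattice and have height~0. Adding this to the $x\sum_{\mu\covby\lambda}F_\mu A_\mu$ term recombines to $(x+c_1)\sum_{\mu\covby\lambda}F_\mu A_\mu$, matching the expression above. This completes the derivation.

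There is no genuine obstacle here: the nontrivial content has already been packaged into Theorem~\ref{thm:symmetricgeneratingrecurrence}. The only thing to watch is the bookkeeping of factorials and the cosmetic step of separating the $k=1$ contribution so that the leading $x$ appears with coefficient~$1$ rather than being absorbed into $x+c_1$, which simply reflects the slight asymmetry in how $\varphi_A$ treats $p_1$ versus $p_k$ for $k\ge 2$.
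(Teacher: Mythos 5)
Your proposal is correct and follows exactly the paper's own route: apply $\varphi_A$ to~\eqref{eq:SchurNewton}, rewrite the images via Theorem~\ref{thm:imageschur} and Proposition~\ref{prop:imageofpowersums}, clear the factorials to produce $\binom{n-1}{k-1}$, and absorb the $c_1$ part of $\varphi_A(p_1)=x+c_1$ into the $k=1$ term of the rim-hook sum. The bookkeeping is accurate throughout.
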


\begin{proof}
Apply $\varphi_A$ to~\eqref{eq:SchurNewton} and use
Theorem~\ref{thm:imageschur} and Proposition~\ref{prop:imageofpowersums} to obtain
\[
\frac{F_\lambda}{(n-1)!} A_\lambda  = (x+c_1) \sum_{\mu \covby \lambda}  \frac{F_\mu}{(n-1)!} A_\mu + \sum_{k=2}^n \frac{c_k}{(k-1)!} \sum_{\nu\in\rhdown{k}\lambda} (-1)^{\htt(\lambda/\nu)}\frac{F_\nu}{(n-k)!} A_\nu.
\]
The claimed result now follows after rearranging the terms.
\end{proof}

\section{Results for specific Appell sequences}
\label{sec:specificAppellsequences}

As discussed in the introduction, one of the main motivations for studying Wronskian Appell polynomials is their appearance in the rational solutions of Painlev\'e equations and in the study of exceptional orthogonal polynomials. All of these appearances involve the use of specific choices of Appell sequences and specific partitions. In this section we examine the consequences of our results for several Appell sequences of interest, with an emphasis on the sequences relevant for these applications.

%

\subsection{Wronskians of monomials}
The simplest example of an Appell sequence is the monomial sequence
$M=(x^n)_{n=0}^\infty$.
The exponential generating function of this sequence is $f_M(t)=1$, so $\log f_M(t)=0$.
Therefore $z_0=1$ and $z_k=c_k=0$ for all $k\geq 1$.
By direct computation, it is not hard to check that
\begin{equation*}
\label{eq:Wronskianmonomial}
M_\lambda(x)=x^{\lvert \lambda \rvert},
\end{equation*}
for any partition $\lambda$.
Therefore Theorem~\ref{thm:average} reduces to~\eqref{eq:Plancherel}.
The recurrence relations \eqref{eq:topdown1}--\eqref{eq:topdown2} and \eqref{eq:generatingrecursion} also simplify to the well-known identities
\begin{equation*}
	(\lvert \lambda \rvert+1) F_\lambda = \sum_{\gamma \cov \lambda} F_\gamma,
	\qquad \qquad F_\lambda = \sum_{\mu \covby \lambda} F_\mu,
\end{equation*}
and the not so well-known
\begin{equation*}
  \sum_{\gamma\in\rhup{k}\lambda} (-1)^{\htt(\gamma/\lambda)} F_\gamma = 0 \qquad (k\geq 2).
\end{equation*}
By Corollary~\ref{cor:dual}, the sequence $(x^n)_{n=0}^\infty$ is self-dual.
The conditions of Theorem~\ref{thm:integercoefficients} are also satisfied,
although the integrality of the Wronskian monomials is trivial.

\subsection{Yablonskii-Vorobiev polynomials (P-II) and Wronskians of Hermite polynomials (P-IV)}
The rational solutions of the Painlev\'e II and the Painlev\'e IV equation can be described in terms of a Wronskian of the polynomials given by the generating series
\begin{equation*}
	\text{P-II: }\ \sum_{k=0}^\infty p_k(x) \frac{t^k}{k!} = \exp\left(xt - \tfrac{4}{3}t^3\right),
	\qquad
	\text{P-IV: }\ \sum_{k=0}^\infty \He_k(x) \frac{t^k}{k!}=\exp\left(xt - \tfrac{1}{2}t^2\right).
\end{equation*}
These Wronskians (for specific partitions) are called Yablonskii-Vorobiev polynomials (P-II) and generalized Hermite polynomials and generalized Okamoto polynomials (P-IV), see \cite[Section 6.1.1 and 6.1.3]{VanAssche}.
Both sequences have generating series of the form
\begin{equation}\label{eq:YaHeclassofpolynomials}
	\sum_{k=0}^\infty A_k(x) \frac{t^k}{k!} = \exp\left(xt + \alpha t^r\right),
\end{equation}
where $\alpha\in\mathbb{R}$ and $r$ is some positive integer. 
With $\alpha=0$ we recover the monomials, and with $r=1$ we obtain translated monomials $A_n(x)=(x+\alpha)^n$.
If $A=(A_n)_{n=0}^\infty$ is the Appell sequence satisfying \eqref{eq:YaHeclassofpolynomials}, then
\begin{equation*}
	f_A(t)=\exp(\alpha t^r), \qquad \log f_A(t)=\alpha t^r.
\end{equation*}
Therefore $z_k=0$ unless $k$ is a multiple of $r$, in which case $z_k=k!\cdot\alpha^{k/r}/(k/r)!$, whereas $c_k=0$ for $k\neq r$ and $c_r=r!\cdot\alpha$.
These polynomials obey the recurrence 
\begin{equation*}\label{eq:YaHereccurence}
	A_n(x) = x A_{n-1}(x) + r\alpha \frac{(n-1)!}{(n-r)!} A_{n-r}(x)\qquad(n \geq r),
\end{equation*}
along with the initial conditions $A_n(x)=x^n$ for $0\le n<r$.

The corresponding dual Appell sequence has generating series $f_{A^*}(t)=\exp((-1)^{r-1}\alpha t^r)$ (see Theorem~\ref{thm:dualWronskian}(c)), so this class of polynomials is closed under taking duals.
In particular if $\alpha\neq 0$, then $A$ is self-dual if and only if $r$ is odd.
A simple calculation gives
\begin{equation*}\label{eq:YaHetransformationfordual}
	A_n^*(x)=\rho^n A_n(\rho^{-1}x),
\end{equation*}
where $\rho=-\exp(\pi i/r)$. In turn this yields the relation
\begin{equation}\label{eq:YaHeWronskiandualtransformation}
	A_{\lambda'}(x)=\rho^{\lvert \lambda \rvert} A_\lambda(\rho^{-1} x).
\end{equation}
When considering the Hermite polynomials, $r=2$ and $\alpha=-1/2$, then \eqref{eq:YaHeWronskiandualtransformation} reduces to what is already known; see for example \cite{Bonneux_Stevens,Duran-Hermite}.

Theorem \ref{thm:integercoefficients} implies $A_\lambda$ has integer coefficients if $r\alpha$ is an integer. Again specializing to the case of Hermite polynomials, this proves Conjecture 3.7 in \cite{Bonneux_Stevens}.

\begin{corollary}
\label{cor:HeZ}
For any partition $\lambda$, we have $\He_\lambda\in \mathbb{Z}[x]$.
\end{corollary}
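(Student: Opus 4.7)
The plan is to apply Theorem \ref{thm:integercoefficients} directly to the probabilists' Hermite Appell sequence $\He = (\He_n)_{n=0}^\infty$. First I would identify the cumulants $c_k$ for this sequence from its exponential generating function. Since $\sum_{k\ge 0}\He_k(x)\,t^k/k! = \exp(xt - t^2/2)$, comparison with~\eqref{eq:expgenfunAppell} gives $f_{\He}(t) = \exp(-t^2/2)$, and hence
\[
\log f_{\He}(t) = -\tfrac{1}{2}t^2 = \sum_{k=1}^\infty c_k\frac{t^k}{k!}.
\]
Reading off coefficients yields $c_2 = -1$ and $c_k = 0$ for all $k \neq 2$. (Equivalently, this is the instance $r = 2$, $\alpha = -1/2$ of the family~\eqref{eq:YaHeclassofpolynomials}, so $c_r = r!\,\alpha = -1$.)

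Next I would verify the hypothesis of Theorem~\ref{thm:integercoefficients}, namely that $c_k/(k-1)! \in \mathbb{Z}$ for every $k \ge 1$. This is immediate: $c_2/1! = -1 \in \mathbb{Z}$, while for all other $k$ the quotient is $0$. The theorem then yields $\He_\lambda \in \mathbb{Z}[x]$ for every partition $\lambda$, which is precisely the claim.

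There is essentially no obstacle at this stage, since the work has been absorbed into Theorem~\ref{thm:integercoefficients}; the whole deduction reduces to checking that the single nonzero cumulant of the Hermite sequence is an integer after division by $(k-1)! = 1$. The nontrivial content lies upstream in the fact that $H(\lambda)s_\lambda \in \mathbb{Z}[p_1, p_2, \ldots]$, which was invoked in the proof of Theorem~\ref{thm:integercoefficients}. Thus Conjecture~3.7 of \cite{Bonneux_Stevens} reduces, once the symmetric function framework is in place, to verifying a single numerical condition.
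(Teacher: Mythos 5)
Your proof is correct and follows the same route as the paper: identify $f_{\He}(t)=\exp(-t^2/2)$, read off $c_2=-1$ and $c_k=0$ otherwise, check $c_k/(k-1)!\in\mathbb{Z}$, and invoke Theorem~\ref{thm:integercoefficients}. The paper phrases this via the general family~\eqref{eq:YaHeclassofpolynomials} (integrality holds whenever $r\alpha\in\mathbb{Z}$, with $r=2$, $\alpha=-1/2$ for Hermite), but the argument is identical.
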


The polynomials used for constructing the Yablonskii-Vorobiev polynomials have $r=3$ and $\alpha=-4/3$, thus $r\alpha$ is again an integer, and the Wronskian polynomials again have integer coefficients.

The generating recurrence relation (Theorem~\ref{thm:generatingrecurrence}) specializes to
\begin{equation*}\label{eq:YaHegeneratingrecurrence}
	F_\lambda A_\lambda = x \sum_{\mu \covby \lambda} F_\mu A_\mu \ +\ r\alpha \frac{(\lvert \lambda \rvert-1)!}{(\lvert \lambda \rvert -r)!} \sum_{\nu\in\rhdown{r}\lambda} (-1)^{\htt(\lambda/\nu)} F_\nu A_\nu.
\end{equation*}
Setting $\alpha=-1/2$ and $r=2$, we recover the generating recurrence relation for the Hermite polynomials \cite{Bonneux_Stevens}.

For $r,k>1$, the top-down relations (Theorem~\ref{thm:topdown}) specialize to  
\begin{align*}\label{eq:YaHektopdown}
	r\cdot r! \cdot \alpha \binom{\lvert \lambda \rvert +r}{r} F_\lambda A_\lambda &= \sum\limits_{\gamma\in\rhup{k}\lambda} (-1)^{\htt(\gamma/\lambda)} F_\gamma A_\gamma &&\text{if }k=r, \\
	0 &= \sum\limits_{\gamma\in\rhup{k}\lambda} (-1)^{\htt(\gamma/\lambda)} F_\gamma A_\gamma  && \text{if }k\neq r.
\end{align*}

If $r>1$, the average Wronskian polynomial (with respect to the Plancherel measure) equals the monomial $x^{|\lambda|}$.
Theorem~\ref{thm:secondmoment} allows us to compute the second moment.

\begin{corollary}
\label{thm:YaHe2moment}
Fix $r>1$ and $\alpha\in\mathbb{R}$.
If $A=(A_n)_{n=0}^\infty$ is as in \eqref{eq:YaHeclassofpolynomials} with parameters $\alpha$ and $r$, and $B=(B_n)_{n=0}^\infty$ is as in~\eqref{eq:YaHeclassofpolynomials} with parameters $r\alpha^2$ and $r$, then
\begin{equation*}\label{eq:YaHe2moment}
  \mathbb{E}\left(A^2_{\lambda}(x):\lambda\vdash n\right)
    = \sum_{\lambda\vdash n} \frac{F_{\lambda}^2}{n!} A_{\lambda}^2(x)
    = B_n(x^2).
\end{equation*}
\end{corollary}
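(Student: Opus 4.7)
The plan is to apply Theorem~\ref{thm:secondmoment} directly, reducing everything to a short computation of cumulants for the Appell sequence defined by~\eqref{eq:YaHeclassofpolynomials}.

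First, I would record the cumulants of $A$. Since $\log f_A(t) = \alpha t^r$, matching this against $\log f_A(t) = \sum_{k=1}^\infty c_k t^k/k!$ gives $c_k = 0$ for $k \neq r$ and $c_r = r!\,\alpha$. In particular, because $r > 1$ we have $c_1 = 0$, so the substitution $(x+c_1)^2$ from Theorem~\ref{thm:secondmoment} simplifies to $x^2$.

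Next, I would compute the central Appell sequence $B$ supplied by Theorem~\ref{thm:secondmoment}. Its generating series is determined by
\[
\log f_B(t) = \sum_{k=2}^\infty \frac{c_k^2}{(k-1)!}\,\frac{t^k}{k!},
\]
and the only surviving term corresponds to $k = r$. That term evaluates to
\[
\frac{(r!\,\alpha)^2}{(r-1)!}\,\frac{t^r}{r!} = \frac{r!\,\alpha^2}{(r-1)!}\,t^r = r\alpha^2\,t^r.
\]
Hence $\log f_B(t) = r\alpha^2\,t^r$, which is precisely the form of~\eqref{eq:YaHeclassofpolynomials} with parameters $r\alpha^2$ and $r$. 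So this $B$ coincides with the sequence named in the statement.

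Putting the pieces together, Theorem~\ref{thm:secondmoment} gives
\[
\mathbb{E}(A_\lambda^2(x):\lambda \vdash n) = B_n((x+c_1)^2) = B_n(x^2),
\]
as claimed. There is no real obstacle here; the only thing to be careful about is keeping the normalizations $c_r = r!\,\alpha$ straight (rather than $\alpha$), since an error by a factor of $r!$ would cause the putative $B$ to fail to match the claimed Appell sequence. Everything else is bookkeeping about generating functions of the form $\exp(xt + \beta t^r)$.
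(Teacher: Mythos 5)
Your proposal is correct and follows the paper's proof essentially verbatim: both identify $c_r=r!\,\alpha$ and $c_k=0$ for $k\neq r$ (so $A$ is central and $(x+c_1)^2=x^2$), compute that the $k=r$ term of $\log f_B(t)$ reduces to $r\alpha^2\,t^r$, and then invoke Theorem~\ref{thm:secondmoment}. The arithmetic $\frac{(r!\,\alpha)^2}{(r-1)!}\cdot\frac{1}{r!}=r\alpha^2$ matches the paper's $c_r^2/(r-1)!=r\cdot r!\cdot\alpha^2$, so there is nothing to correct.
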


\begin{proof}
Given that $r>1$, we have that $A$ is a central Appell sequence with $c_r=r!\cdot\alpha$ and $c_k=0$ for $k\neq r$. In particular, $c_r^2/(r-1)! = r \cdot r!\cdot\alpha^2$. Now apply Theorem~\ref{thm:secondmoment}.
\end{proof}

In the special case of Hermite polynomials, the Appell sequence $B$ is
the dual of $A$, so
\begin{equation*}\label{eq:Hermite2moment}
  \sum_{\lambda\vdash n} \frac{F_{\lambda}^2}{n!} \He_{\lambda}^2(x)
   = \He^*_n(x^2)= i^{-n} \He_n(ix^2).
\end{equation*}

\subsection{Wronskians of Laguerre polynomials (P-III and P-V)}
The classical Laguerre polynomials $(L_n^{(\alpha)})_{n=0}^{\infty}$ with parameter $\alpha\in\mathbb{R}$ satisfy the differential relation $\bigl(L_n^{(\alpha)}\bigr)' = - L_{n-1}^{(\alpha+1)}$ and have constant terms
$L_n^{(\alpha)}(0)=\binom{\alpha+n}{n}$ (see \cite{Szego}),
so the \textbf{modified Laguerre polynomials}
\begin{equation*}
	l^{(\alpha)}_n(x)
		:= n! \cdot L_n^{(\alpha-n)}(-x) \qquad (n\geq 0),
\end{equation*}
form an Appell sequence with generating series $f_l(t)=(1+t)^{\alpha}$.
The constant terms are $z_k=\alpha(\alpha-1)\cdots(\alpha-k+1)$ for $k\geq 0$, and $c_k = (-1)^{k-1} (k-1)!\,\alpha$ for $k\geq 1$.
Wronskians of (modified) Laguerre polynomials, corresponding to specific partitions, are used in the rational solutions of the Painlev\'e III and Painlev\'e V equations, see \cite[6.1.2 and 6.1.4]{VanAssche}.
Laguerre polynomials have integer coefficients when $\alpha$ is an integer, and this extends to Wronskians of Laguerre polynomials by Theorem \ref{thm:integercoefficients}.

The dual Laguerre polynomials have generating function $f^*_l(t)=(1-t)^{-\alpha}$ and hence
\begin{equation*}
	\bigl(l^{(\alpha)}_n\bigr)^*(x)
		= (-1)^n \cdot l^{(-\alpha)}_n(-x).
\end{equation*}
Theorem \ref{thm:dualWronskian} therefore implies
\begin{equation*}
	l^{(\alpha)}_{\lambda'}(x)
		= (-1)^{|\lambda|} \cdot l^{(-\alpha)}_{\lambda}(-x).
\end{equation*}

By Theorem \ref{thm:average}, the average over the Plancherel measure
is $(x+\alpha)^n$. The second moment may be expressed in terms of the
Appell sequence $B=(B_n)_{n=0}^{\infty}$ determined by
\[
\log f_B(t) = \sum_{k=2}^{\infty}\alpha^2\frac{t^k}{k}
  = -\alpha^2(t+\log(1-t)).
\]
It follows that
\[
\sum_{n=0}^\infty B_n(x)\frac{t^n}{n!}
  =\frac{e^{(x-\alpha^2)t}}{(1-t)^{\alpha^2}}.
\]
This coincides with centralizing the dual Laguerre Appell sequence with
parameter $\alpha^2$; i.e.,
\[
B_n(x)=\bigl(l_n^{(\alpha^2)}\bigr)^*(x-\alpha^2)
  =(-1)^n\cdot l_n^{(-\alpha^2)}(-x+\alpha^2),
\]
and thus Theorem~\ref{thm:secondmoment} implies
\[
\mathbb{E}\Bigl(\Bigl(l^{(\alpha)}_\lambda(x)\Bigr)^2:\lambda\vdash n\Bigr)
  = B_n((x+\alpha)^2) 
  = (-1)^n\cdot l_n^{(-\alpha^2)}\bigl(-x^2-2\alpha x\bigr).
\]

The generating recurrence relation~\eqref{eq:generatingrecursion} for $\lambda\vdash n$ takes the form
\begin{equation*}
	F_\lambda l^{(\alpha)}_\lambda
		= x \sum_{\mu \covby \lambda} F_\mu l^{(\alpha)}_\mu + \alpha \sum_{k=1}^n (-1)^{k-1}\frac{(n-1)!}{(n-k)!} \sum_{\nu\in\rhdown{k}\lambda} (-1)^{\htt(\lambda/\nu)} F_\nu l^{(\alpha)}_\nu.
\end{equation*}
We note that if $\lambda$ is a partition whose Wronskian polynomial appears in the rational solution of the Painlev\'e III or V equation, the polynomials on the right hand side of this generating recurrence relation do not necessarily appear in such rational solutions as well. This is analogous to the remarks made in \cite{Bonneux_Stevens} about Wronskian Hermite polynomials and their appearance in the rational solutions of the Painlev\'e IV equation.

\subsection{Wronskians of Jacobi polynomials (P-VI)}
Rational solutions of the Painlev\'e VI equation are expressible in terms of Jacobi polynomials.
A classical formula for these polynomials with parameters $\alpha,\beta\in\mathbb{R}$ is 
\begin{equation*}
	P_n^{(\alpha,\beta)}(x) = \frac{1}{n!} \sum_{k=0}^n \binom{n}{k} (n+\alpha+\beta+1)_k (\alpha+k+1)_{n-k} \left(\frac{x-1}{2}\right)^k \qquad (n \geq 0),
\end{equation*}
where $(a)_k=a(a+1)\cdots(a+k-1)$, see \cite[Eq 4.21.2]{Szego}.
The Jacobi polynomials which come into play in the rational solutions of P-VI involve parameters that shift with $n$; i.e.,
\begin{equation*}
	P_n^{(\alpha-n,\beta-n)}(x)  
		= \frac{1}{n!} \sum_{m=0}^n \binom{n}{m} (\alpha+\beta-n+1)_{n-m} (\alpha-m+1)_{m} \left(\frac{x-1}{2}\right)^{n-m}.
\end{equation*}
When $\alpha+\beta\notin\mathbb{N}$, these polynomials have degree $n$
and the rescalings
\[
\tilde{P}_n^{(\alpha-n,\beta-n)} := \frac{2^n n!}{(\alpha+\beta-n+1)_n} P_n^{(\alpha-n,\beta-n)}
\]
are monic; we call these \textbf{modified Jacobi polynomials}.
Substituting $x \mapsto x+1$ yields
\begin{equation*}
  A_n^{(\alpha,\beta)}(x):=\tilde{P}_n^{(\alpha-n,\beta-n)}(x+1)
    = \sum_{m=0}^n \binom{n}{m} \frac{(\alpha-m+1)_m}{(\alpha+\beta-m+1)_m} 2^m x^{n-m}\qquad(n\geq0),
\end{equation*}
which by~\eqref{eq:explicitAppell} is an Appell sequence with constant terms
\begin{equation*}
	z_{k}
		= 2^k \frac{(\alpha-k+1)_k}{(\alpha+\beta-k+1)_k}
		= 2^k \frac{(-\alpha)_k}{(-\alpha-\beta)_k}.
\end{equation*}
The exponential generating function is therefore a hypergeometric function; namely,
\begin{equation*}
	f_{A^{(\alpha,\beta)}}(t) 
		= {}_1 F_1(-\alpha; -\alpha-\beta; 2t).
\end{equation*} 
Since the Appell property is preserved under translations, the modified Jacobi polynomials are also an Appell sequence.
The generating function is determined by the above formulas, but we lack
explicit formulas for the constants $c_k$. Without such formulas, the specialization of our main results to these polynomials cannot be made explicit.

\section*{Acknowledgements}
The first and last authors are supported in part by the long term structural funding-Methusalem grant of the Flemish Government, and by EOS project 30889451 of the Flemish Science Foundation (FWO). Marco Stevens is also supported by the Belgian Interuniversity Attraction Pole P07/18, and by FWO research grant G.0864.16.
Additionally, we would like to thank Guilherme Silva for introducing the second and last authors to each other.

\end{document}